\documentclass[11 pt]{article}
\setlength{\textheight}{23cm}
\setlength{\textwidth}{16cm}
\setlength{\oddsidemargin}{0cm}
\setlength{\evensidemargin}{0cm}
\setlength{\topmargin}{0cm}
\usepackage{amsmath, amssymb}
\usepackage{amsthm}

\theoremstyle{plain} 
\newtheorem{theorem}{\sc \bf Theorem}[section]
\newtheorem{lemma}[theorem]{\noindent\sc \bf Lemma}
\newtheorem{corollary}[theorem]{\noindent\sc \bf Corollary}

\theoremstyle{plain}

\newtheorem{example}[theorem]{\sc \bf Example}


\def \RR {\mathbb{R}}

\makeatother

\title{\uppercase {Classification of Ordered Type Soliton Metric Lie Algebras by a Computational Approach} } 

\author{H\"{u}lya Kad{\i}o\u{g}lu\\ \small Department of Mathematics Education , Y{\i}ld{\i}z Technical University, Turkey\\ \small hkadio@yildiz.edu.tr}
\date{}
\begin{document}
   \maketitle
   
\footnote{ 
\textit{Key words and phrases:} Nilsoliton Metrics, Nilpotent Lie algebras
}
   \begin{abstract} 
   In this study, we classify some soliton nilpotent Lie algebras and possible candidates in dimension 8 and 9 up to isomorphy. We focus on $1<2<...<n$ type of derivations where $n$ is the dimension of the Lie algebras.
 We present algorithms to generate possible algebra structures.  
   \end{abstract}

\section{Introduction} 

In this paper, we compute and clasify $n$ dimensional $(n=8,9)$ nilsoliton metric Lie algebras with eigenvalue type
$1<2<....<n$, which will be called "ordered type of Lie algebra" throughout this paper. We use a computational procedure implemented in a computer
algebra system MATLAB to achieve this goal. In the literature, {\em six} dimensional nilpotent Lie algebras have been 
classified by algorithmic approaches (\cite{Graafb}). In dimension {\em seven} and lower, 
nilsoliton metric Lie algebras have been classified (\cite{Lauretb}, \cite{Will}, \cite{Kad}, \cite{Culmaa}, 
\cite{Culmab}, \cite{Lauretd}, \cite{Nikoa}, \cite{Arroyo}). Summary and details of some other classifications can be 
found in \cite{Lauretc}.
In our paper,  we focus on dimensions eight and nine. We note that we have found that our algorithm gives consistent 
results with the literature in lower dimensions. We use a computational procedure that is similar to the one that we 
have used in our previous paper \cite{Kad}. 

In our previous paper, we classified all the soliton and nonsoliton metric Lie algebras where the corresponding Gram 
matrix is invertible and of dimension $7$ and $8$ up to isomorphism. 
If corresponding Gram matrix is invertible, then the soliton metric condition $Uv=[1]$ has a unique solution. 
So in this case,  it is easy to check if the algebra is soliton or not. But in noninvertible case, there is more than one solution.
 Therefore it is hard to guess if one of the solutions provides the soliton condition without solving Jacobi identity which is nonlinear. 
On the other hand, it may be easy if we can eliminate some algebras which admits a derivation $D$ that does not
 have ordered eigenvalues without solving the following  soliton metric condition $Uv=[1]$. 
For this,  we prove that if the nilpotent Lie algebra admits a soliton metric with corresponding Gram matrix 
of $\eta$ being noninvertible, all the solutions of $Uv=[1]$  has a unique derivation. This theorem allows us to omit 
several cases that come from non ordered eigenvalues without considering Jacobi identity. 

This paper is organized as follows. In section \ref{pre}, we provide some preliminaries that we use for our 
classifications. 
In section \ref{theory}, we give specific Jacobi identity conditions for Lie algebras up to dimension {\em nine}.
 This allows us to decide whether the Lie algebra has a soliton metric or not. 
In  Section \ref{alg}, we give details of our classifications with specific examples  and provide algorithmic 
procedures.
Section \ref{conc} contains  our concluding remarks. 

\section{Preliminaries} \label{pre}

Let $(\eta_{\mu},Q)$ be a metric algebra , where $\mu \in \Lambda^2 \eta \otimes \eta^*$. Let $B=\{X_i\}_{i=1}^n$ be a $Q$-orthonormal basis of $\eta_{\mu}$ (We always assume that basis are ordered). The nil-Ricci endomorphism $Ric_{\mu}$ is defined as $<Ric_{\mu}X,Y>=ric_{\mu}(X,Y)$, where

\begin{equation}
ric_{\mu}(X,Y)=-\frac{1}{2} \displaystyle\sum \limits_{i=1}^n <[X,X_i],[Y,X_i]>+\frac{1}{4} \displaystyle\sum \limits_{i=1}^n <[X_i,X_j],X><[X_i,X_j],Y>
\end{equation}
for $X,Y\in \eta$ (We often write an inner product $Q(.,.)$ as $<.,.>$.). When $\eta$ is a nilpotent Lie algebra, the nil-Ricci endomorphism is the Ricci endomorphism. If all elements of the basis are eigenvectors for the nil-Ricci endomorphism $Ric_{\mu}$, we call the orthonormal basis a Ricci eigenvector basis.
\newline

Now we define some combinatorial objects associated to a set of integer triples $\Lambda \subset \{(i,j,k)|1 \leq i,j,k\leq n\}$. For $ 1 \leq i,j,k\leq n$, define $1 \times n$ row vector $y_{ij}^k$ to be $\epsilon_i^T+\epsilon_j^T-\epsilon_k^t$, where $\{\epsilon\}_{i=1}^n$ is the standart orthonormal basis for $\RR ^n$. We call the vectors in $\{y_{ij}^k|(i,j,k)\in \Lambda\}$ {\it root vectors} for $\Lambda$. Let $y_1,y_2,...,y_m$ (where $m=\left|\Lambda\right|$) be an enumeration of the root vectors in dictionary order. We define {\it root matrix} $Y_{\Lambda}$ for $\Lambda$ to be the $m \times n$ matrix whose rows are the root vectors $y_1,y_2,...,y_m$. The {\it Gram matrix} $U_{\Lambda}$ for $\Lambda$ is the $m \times m$ matrix defined by $U_{\Lambda}=Y_{\Lambda}Y_{\Lambda}^T$; the $(i,j)$ entry of $U_{\Lambda}$  is the inner product of the {\it i} th and {\it j} th root vectors. It is easy to see that $U$ is a symmetric matrix. It has the same rank as the root matrix, i.e. $Rank(U_{\Lambda})=Rank(Y_{\Lambda})$. Diagonal elements of $U$ are all three, and the off-diagonal entries of $U$ are in the set $\{-2,-1,0,1,2\}$. For more information, see \cite{Paynea}. Let $D$ has distinct real positive eigenvalues, and let $\Lambda$ index the structure constants for $\eta$ with respect to eigenvector basis $B$.   If  $(i_1,j_1,k_1) \in \Lambda$ and $(i_2,j_2,k_2) \in \Lambda$, then $<y_{i_1,j_1}^{k_1}, y_{i_2,j_2}^{k_2}> \neq 2$. Thus $U$ does not contain two as an entry \cite{Kad}.
\begin{lemma} \label{lemma1}
Let $(\eta,Q)$ be an $n$-dimensional inner product space, and let $\mu$ be an element of $\Lambda^2 \eta ^*\otimes \eta$. Soppose that $\eta_{\mu}$ admits a symmetric derivation $D$ having $n$ distinct eigenvalues $0<\lambda_1<\lambda_2<...<\lambda_n$ with corresponding orthonormal eigenvectors $X_1,X_2,...,X_n$. Let $\alpha_{ij}^k$ denote the structure constants for $\eta$ with respect to the ordered basis $B=\{X_i\}_{i=1}^n$. Let $1\leq i<j\leq n$. Then

\begin{enumerate}
\item If there is some $k\in \{1,2,...,n\}$ such that $\lambda_k=\lambda_i+\lambda_j$, then $[X_i,X_j]$ is a scalar multiple of $X_k$; otherwise $X_i$ and $X_j$ commutes.
\item $\alpha_{ij}^k \neq 0$ if and only if $X_k \in [\eta_{\mu},\eta_{\mu}]$.
\end{enumerate}

\end{lemma}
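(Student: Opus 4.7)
The plan is to exploit the derivation identity $D[X_i,X_j]=[DX_i,X_j]+[X_i,DX_j]$ together with the fact that the eigenvectors $X_k$ of $D$ span pairwise-distinct one-dimensional eigenspaces. Concretely, I first compute
\begin{equation*}
D[X_i,X_j]=\lambda_i[X_i,X_j]+\lambda_j[X_i,X_j]=(\lambda_i+\lambda_j)[X_i,X_j],
\end{equation*}
so $[X_i,X_j]$, if nonzero, is an eigenvector of $D$ with eigenvalue $\lambda_i+\lambda_j$. Expanding $[X_i,X_j]=\sum_k \alpha_{ij}^k X_k$ in the eigenbasis and equating coefficients gives $\alpha_{ij}^k(\lambda_k-\lambda_i-\lambda_j)=0$ for every $k$.

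For part (1), I would then invoke the hypothesis that the $\lambda_k$ are all distinct: at most one index $k$ can satisfy $\lambda_k=\lambda_i+\lambda_j$. If such a $k$ exists, the relation above forces $\alpha_{ij}^r=0$ for every $r\neq k$, so $[X_i,X_j]=\alpha_{ij}^k X_k$ as claimed. If no such $k$ exists, then $\alpha_{ij}^r=0$ for all $r$, and hence $[X_i,X_j]=0$, i.e.\ $X_i$ and $X_j$ commute.

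For part (2), the $(\Rightarrow)$ direction is immediate: if $\alpha_{ij}^k\neq 0$, then part (1) gives $X_k=(\alpha_{ij}^k)^{-1}[X_i,X_j]\in[\eta_\mu,\eta_\mu]$. For $(\Leftarrow)$, I would use that $[\eta_\mu,\eta_\mu]$ is spanned by the brackets $\{[X_s,X_t]\}_{1\le s<t\le n}$. By part (1), every such bracket is either zero or a scalar multiple of a single basis vector $X_r$ with $\lambda_r=\lambda_s+\lambda_t$. Writing $X_k$ as a linear combination of these brackets and comparing coefficients in the (unique) eigenbasis expansion, the component along $X_k$ can only come from pairs $(s,t)$ with $\lambda_s+\lambda_t=\lambda_k$, in which case $[X_s,X_t]=\alpha_{st}^k X_k$; since the left-hand side is nonzero, at least one coefficient $\alpha_{st}^k$ must be nonzero.

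The proof is essentially a direct consequence of the derivation identity together with the spectral simplicity of $D$, so no step presents a real obstacle; the only point requiring care is the second part, where one must resist conflating ``lies in $[\eta_\mu,\eta_\mu]$'' with ``is itself a single bracket,'' and instead use the uniqueness of eigenbasis expansions to extract a nontrivial structure constant from the linear combination.
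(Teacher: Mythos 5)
Your proof is correct. The paper itself states Lemma \ref{lemma1} without proof (it is imported from the cited work of Payne and Kadio\u{g}lu--Payne), and your argument --- applying the derivation identity to get $D[X_i,X_j]=(\lambda_i+\lambda_j)[X_i,X_j]$, then using the simplicity of the spectrum of $D$ --- is exactly the standard one. Your handling of part (2) is also right: as literally written the claim is only true under the reading ``$X_k\in[\eta_{\mu},\eta_{\mu}]$ iff $\alpha_{ij}^k\neq 0$ for \emph{some} pair $i<j$,'' and your use of the linear independence of the eigenbasis to extract such a pair is the correct way to close that direction.
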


\begin{theorem} \label{theorem2} (\cite{Paynea}) Let $\eta$ be a vector space, and let $ B={X_i}_{i=1}^n$ be a basis for $\eta$. Suppose that a set of nonzero structure constants $\alpha_{i,j}^k$ relative to $B$, indexed by $\Lambda$, defines a skew symmetric product on $\eta$. Assume that if $(i,j,k)\in \Lambda$, then $i<j<k$. Then the algebra is a Lie algebra if and only if whenever there exists $m$ so that the inner product of root vectors $<y_{ij}^l,y_{lk}^m>=-1$ for triples $(i,j,l)$ and $(l,k,m)$ or $(k,l,m)$ in $\Lambda$, the equation

\begin{equation}
\displaystyle\sum_{s<m} \alpha_{i,j}^s \alpha_{s,k}^m +\alpha_{j,k}^s \alpha_{s,i}^m+ \alpha_{k,i}^s \alpha_{s,j}^m=0 \label{JI}
\end{equation}
holds. Furthermore, a term of form $\alpha_{i,j}^l \alpha_{l,k}^m$ is nonzero if and only if $<y_{i,j}^l,y_{l,k}^m>=-1$
\end{theorem}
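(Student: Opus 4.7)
The plan is to derive equation~(\ref{JI}) by expanding the standard Jacobi identity $[X_i,[X_j,X_k]]+[X_j,[X_k,X_i]]+[X_k,[X_i,X_j]]=0$ in terms of structure constants and reading off the coefficient of each basis vector $X_m$. The root-vector inner product condition will turn out to be a combinatorial signature flagging exactly when a product of two structure constants can be simultaneously nonzero, so equation~(\ref{JI}) is just the Jacobi identity restricted to its nontrivial instances.

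For the main step, I would fix a triple $(i,j,k)$ and a target basis vector $X_m$. Expanding each inner bracket via $[X_p,X_q]=\sum_s \alpha_{p,q}^s X_s$, applying the outer bracket, and then rewriting every occurrence of a structure constant into canonical form (using $\alpha_{p,q}^r=-\alpha_{q,p}^r$ so that triples in $\Lambda$ remain strictly increasing), the three cyclic contributions consolidate into the three terms $\alpha_{i,j}^s\alpha_{s,k}^m+\alpha_{j,k}^s\alpha_{s,i}^m+\alpha_{k,i}^s\alpha_{s,j}^m$ summed over the intermediate index $s$. The bound $s<m$ is automatic, since for $\alpha_{s,k}^m$ to be nonzero one needs either $(s,k,m)\in\Lambda$ (forcing $s<k<m$) or $(k,s,m)\in\Lambda$ (forcing $k<s<m$), and analogously for the other two terms. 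For the \textbf{furthermore} clause, I would directly compute
\[
\langle y_{i,j}^l, y_{l,k}^m\rangle = \delta_{i,l}+\delta_{i,k}-\delta_{i,m}+\delta_{j,l}+\delta_{j,k}-\delta_{j,m}-1-\delta_{l,k}+\delta_{l,m},
\]
the $-1$ coming from the shared middle index $\delta_{l,l}$. The inner product equals $-1$ exactly when all remaining deltas vanish, which is precisely the index-distinctness that permits both $\alpha_{i,j}^l$ and $\alpha_{l,k}^m$ (or its reordered counterpart $-\alpha_{k,l}^m$) to be nonzero simultaneously.

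The main obstacle is the sign and ordering bookkeeping. Because $\Lambda$ only records ordered triples $i<j<k$, every appearance of a structure constant in the raw Jacobi expansion may first show up with indices out of order and must be rewritten by skew-symmetry, with sign flips that must cancel correctly so the three cyclic pieces combine into the symmetric expression~(\ref{JI}). Likewise, the two allowed forms $(l,k,m)$ and $(k,l,m)$ in $\Lambda$ correspond to the same root-vector pair $(y_{i,j}^l,y_{l,k}^m)$, so the inner product formula must be checked to behave uniformly under interchanging $k$ and $l$ in those positions. Once this bookkeeping is carried out, both directions of the equivalence are immediate: the coefficient of $X_m$ in the Jacobi expansion is exactly the left-hand side of~(\ref{JI}), and the root-vector test isolates precisely those coefficients which are not identically zero, so imposing (\ref{JI}) only on the flagged indices is equivalent to imposing the full Jacobi identity.
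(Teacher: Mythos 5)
This theorem is imported from \cite{Paynea} and the paper gives no proof of it, so there is no internal argument to compare against; judged on its own, your proposal follows the natural (and essentially the original) route: expand the Jacobi identity in structure constants, read off the coefficient of $X_m$, and observe that the root-vector inner product is exactly the combinatorial test for a product of two structure constants to survive. Your expansion of $\langle y_{i,j}^l, y_{l,k}^m\rangle$ is correct, but the sentence ``the inner product equals $-1$ exactly when all remaining deltas vanish'' deserves one more line: once $(i,j,l)\in\Lambda$ and $(l,k,m)$ or $(k,l,m)\in\Lambda$ with the convention that triples are strictly increasing, the orderings $i<j<l<m$ and $k<m$, $k\neq l$ already force $\delta_{il}=\delta_{jl}=\delta_{im}=\delta_{jm}=\delta_{lk}=\delta_{lm}=0$, so the inner product collapses to $\delta_{ik}+\delta_{jk}-1$ and equals $-1$ precisely when $k\notin\{i,j\}$. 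You should then note separately that the Jacobi instances with $k\in\{i,j\}$ are trivially satisfied by skew-symmetry, so omitting them loses nothing; this is also the reason the ``furthermore'' clause, read literally, needs $i$, $j$, $k$ distinct (a term $\alpha_{i,j}^l\alpha_{l,i}^m$ can be nonzero while the inner product is $0$, a qualifier the paper itself adds later in the proof of Corollary \ref{ijk}). With that caveat made explicit, the sign bookkeeping you flag is routine and both directions of the equivalence go through as you describe.
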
 

\begin{theorem} \label{theorem3} \cite{Paynea}
Let $(\eta_{\mu},Q)$ be a metric algebra and $B=\{X_i\}_{i=1}^n$ be a Ricci eigenvector basis for $\eta_{\mu}$. Let $Y$ be the root matrix for $\eta_{\mu}$. Then the eigenvalues of the nil-Ricci endomorphism are given by 
\begin{equation}
Ric_{\mu}^{B}=-\frac{1}{2}Y^Tv   \label{ricci}
\end{equation}
where $v=[\alpha^2]$.
\end{theorem}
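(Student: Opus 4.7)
The plan is to verify the claimed equality entry by entry. Since $B$ is a Ricci eigenvector basis, the matrix of $Ric_\mu$ in this basis is diagonal, so its eigenvalues are exactly the diagonal entries $ric_\mu(X_p,X_p)$ for $p=1,\dots,n$. The statement therefore reduces to showing that $(Y^T v)_p = -2\,ric_\mu(X_p,X_p)$ for every $p$.

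First I would unfold the Ricci form using the structure constants $\alpha_{ij}^k$ defined by $[X_i,X_j]=\sum_k\alpha_{ij}^k X_k$. Orthonormality of $B$ gives $\langle[X_p,X_i],[X_p,X_i]\rangle=\sum_l(\alpha_{pi}^l)^2$ and $\langle[X_i,X_j],X_p\rangle=\alpha_{ij}^p$, so
\[
ric_\mu(X_p,X_p)=-\tfrac{1}{2}\sum_{i,l}(\alpha_{pi}^l)^2+\tfrac{1}{4}\sum_{i,j}(\alpha_{ij}^p)^2.
\]
Next I would collapse these free-index sums onto the index set $\Lambda$. By Lemma \ref{lemma1} combined with the strictly increasing eigenvalues, every triple $(a,b,c)\in\Lambda$ satisfies $a<b<c$. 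The antisymmetry $\alpha_{ij}^k=-\alpha_{ji}^k$ then produces asymmetric bookkeeping: in the second (double) sum each unordered pair $\{i,j\}$ appears twice, giving the factor $2\sum_{(a,b,c)\in\Lambda,\,c=p}(\alpha_{ab}^c)^2$; in the first sum, the contributions from $i<p$ and $i>p$ combine, after reindexing via antisymmetry, to give $\sum_{(a,b,c)\in\Lambda,\,p\in\{a,b\}}(\alpha_{ab}^c)^2$ without any extra factor. Hence
\[
ric_\mu(X_p,X_p)=-\tfrac{1}{2}\Bigl(\sum_{\substack{(a,b,c)\in\Lambda\\p\in\{a,b\}}}(\alpha_{ab}^c)^2-\sum_{\substack{(a,b,c)\in\Lambda\\c=p}}(\alpha_{ab}^c)^2\Bigr).
\]

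Finally, I would compute $(Y^T v)_p$ directly from the definitions. The row of $Y$ indexed by $(a,b,c)\in\Lambda$ is $y_{ab}^c=\epsilon_a^T+\epsilon_b^T-\epsilon_c^T$, and the matching entry of $v$ is $(\alpha_{ab}^c)^2$, so
\[
(Y^T v)_p=\sum_{(a,b,c)\in\Lambda}(\delta_{ap}+\delta_{bp}-\delta_{cp})(\alpha_{ab}^c)^2,
\]
which splits precisely as the difference inside the bracket above. Multiplying by $-\tfrac{1}{2}$ recovers $ric_\mu(X_p,X_p)$, which is the desired identity. The main obstacle is purely combinatorial: correctly tracking when antisymmetry doubles a sum and verifying that the reindexing from free sums to sums over $\Lambda$ is a bijection. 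Lemma \ref{lemma1} is the essential input here, since it forces the index $k$ of any nonzero $\alpha_{ij}^k$ to be strictly the largest among $\{i,j,k\}$, so that the ordering convention built into $\Lambda$ loses no information.
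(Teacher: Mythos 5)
Your argument is correct. Note, however, that the paper offers no proof of this statement at all: it is quoted verbatim from \cite{Paynea}, so there is nothing internal to compare against, and your direct entry-by-entry verification is essentially the standard computation one would find in that reference. The bookkeeping is right: diagonality of $Ric_\mu$ in a Ricci eigenvector basis reduces the claim to $(Y^Tv)_p=-2\,ric_\mu(X_p,X_p)$; the second (double) sum in the definition of $ric_\mu$ picks up each triple of $\Lambda$ with third entry $p$ twice, turning the coefficient $\tfrac14$ into $\tfrac12$, while the first sum picks up each triple with $p$ among its first two entries exactly once; and since the paper's convention forces $a<b<c$ for every $(a,b,c)\in\Lambda$, the three Kronecker deltas in $(Y^Tv)_p=\sum_{(a,b,c)\in\Lambda}(\delta_{ap}+\delta_{bp}-\delta_{cp})(\alpha_{ab}^c)^2$ never overlap, so the two sums match term by term. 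Two small points worth making explicit if you write this up: first, the paper's displayed formula for $ric_\mu$ writes $\sum_{i=1}^n$ on the second term even though $j$ is also summed, so you are (correctly) reading it as a double sum over $i,j$; second, the statement of the theorem assumes only a Ricci eigenvector basis, not a derivation with distinct eigenvalues, so rather than invoking Lemma \ref{lemma1} you should appeal directly to the paper's standing convention (as in Theorem \ref{theorem2}) that triples of $\Lambda$ satisfy $i<j<k$ and index each nonzero bracket once --- that is all the combinatorial input your reindexing actually uses.
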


\begin{theorem} \label{theorem1}(\cite{Paynea} and \cite{Kad}) 
Let $(\eta,Q)$ be a nonabelian metric algebra with Ricci eigenvector basis $B$. The following are equivalent:
\begin{enumerate}
\item $(\eta_{\mu},Q)$ satisfies the nilsoliton condition with nilsoliton constant $\beta$.
\item The eigenvalue vector $V_D$ for $D=Ric-\beta Id$ with respect to $B$ lies in the kernel of the root matrix for $(\eta_{\mu},Q)$ with respect to $B$.
\item For noncommuting eigenvectors $X$ and $Y$ for the nil-Ricci endomorphism with eigenvalues $\kappa_X$ and $\kappa_Y$, the bracket $[X,Y]$ is an eigenvector for the nil-Ricci endomorphism with eigenvalue $\kappa_X+\kappa_Y-\beta$.
\item $\beta=y_{ij}^k Ric$ for all $(i,j,k)$ in $\Lambda(\eta_{\mu},B)$.
\end{enumerate}
\end{theorem}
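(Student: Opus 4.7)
The plan is to establish the cycle $(1) \Leftrightarrow (3) \Leftrightarrow (4) \Leftrightarrow (2)$, since each pair of conditions is a reformulation of the same data, with the intermediate step (3) being the most geometric.

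For $(1) \Leftrightarrow (3)$, I would start from the nilsoliton condition: $D := Ric - \beta\, Id$ is a derivation. Because $B$ consists of $Ric$-eigenvectors, every $X_i$ is also a $D$-eigenvector with eigenvalue $\kappa_i - \beta$. Applying the derivation identity $D[X,Y] = [DX,Y] + [X,DY]$ to eigenvectors $X, Y$ with Ricci eigenvalues $\kappa_X, \kappa_Y$ would give $D[X,Y] = (\kappa_X + \kappa_Y - 2\beta)[X,Y]$; when the bracket is nonzero, this identifies $[X,Y]$ as a $D$-eigenvector with that eigenvalue, hence as a $Ric$-eigenvector with eigenvalue $\kappa_X + \kappa_Y - \beta$, which is (3). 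The converse is where a little care is required: condition (3) specifies the action of $D$ on brackets of eigenvector pairs, and one invokes bilinearity of the bracket together with linearity of $D$ to extend the derivation identity to arbitrary $X, Y \in \eta$.

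For $(3) \Leftrightarrow (4)$, I would invoke Lemma \ref{lemma1}: for every $(i,j,k) \in \Lambda$ the bracket $[X_i, X_j]$ is a nonzero scalar multiple of $X_k$, so (3) reads $\kappa_i + \kappa_j - \kappa_k = \beta$. Since $y_{ij}^k = \epsilon_i^T + \epsilon_j^T - \epsilon_k^T$, the left-hand side is precisely $y_{ij}^k\, Ric$, giving (4). For $(4) \Leftrightarrow (2)$, I would observe that $V_D = Ric - \beta \mathbf{1}$ with $\mathbf{1} = (1,\ldots,1)^T$, and that $y_{ij}^k \mathbf{1} = 1 + 1 - 1 = 1$ for every root vector; hence $y_{ij}^k V_D = y_{ij}^k Ric - \beta$, and $Y V_D = 0$ is equivalent to $y_{ij}^k Ric = \beta$ for each row of $Y$, which is precisely (4). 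No step poses a real obstacle: the theorem amounts to careful unpacking of the definitions of Ricci eigenvector basis, derivation, and root vector; the subtlest point is the extension step in $(3) \Rightarrow (1)$ mentioned above, which becomes purely formal once the derivation identity is written in coordinates with respect to $B$.
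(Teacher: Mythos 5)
The paper does not actually prove this theorem: it is quoted from \cite{Paynea} and \cite{Kad}, so there is no in-text argument to compare against. Your proposal is essentially the standard proof and is correct in outline: $(1)\Leftrightarrow(3)$ by applying the derivation identity for $D=Ric-\beta\,Id$ to pairs of basis eigenvectors (the nilsoliton condition being precisely that $Ric-\beta\,Id$ is a derivation) and extending by bilinearity, $(3)\Leftrightarrow(4)$ by reading off the eigenvalue relation $\kappa_i+\kappa_j-\kappa_k=\beta$, and $(4)\Leftrightarrow(2)$ via $y_{ij}^k V_D=y_{ij}^k Ric-\beta$ together with $y_{ij}^k\mathbf{1}=1$. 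The one step you should tighten is the appeal to Lemma \ref{lemma1} in $(3)\Leftrightarrow(4)$: that lemma presupposes a symmetric derivation with $n$ \emph{distinct} eigenvalues, which is not among the hypotheses of Theorem \ref{theorem1} (the Ricci eigenvalues may coincide), so you are not entitled to conclude that $[X_i,X_j]$ is a scalar multiple of a single $X_k$. You do not need that: writing $[X_i,X_j]=\sum_k\alpha_{ij}^k X_k$ and comparing components, condition (3) forces $\kappa_k=\kappa_i+\kappa_j-\beta$ for every $k$ with $\alpha_{ij}^k\neq 0$, i.e.\ for every $(i,j,k)\in\Lambda$, which is exactly (4), and the converse is the same computation read backwards. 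With that substitution the argument is complete.
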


\begin{theorem} \label{invertible} \cite{Kad}
Let $\eta$ be an $n-$ dimensional nonabelian nilpotent Lie algebra which admits a derivation $D$ having distinct real positive eigenvalues. Let $B$ a basis consisting of eigenvectors for the derivation $D$, and let $\Lambda$ index the nonzero structure constants with respect to $B$. Let $U$ be the $m \times m$ Gram matrix . If $U$ is invertible, then the following hold:
\begin{itemize}
\item $ \left|\Lambda\right| \leq n-1$
\item If $(i_1,j_1,k_1) \in \Lambda$ and $(i_2,j_2,k_2) \in \Lambda$, then $<y_{i_1,j_1}^{k_2},y_{i_2,j_2}^{k_2}> \neq -1$
\end{itemize}
\end{theorem}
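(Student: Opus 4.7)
The plan is to treat the two bullets separately: the first is a rank inequality following directly from the derivation hypothesis, and the second is a contrapositive argument that converts a $-1$ entry of $U$ into an explicit linear dependence among rows of the root matrix $Y$ by way of the Jacobi identity (Theorem \ref{theorem2}).

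For the first bullet, I would observe that the eigenvalue vector $\lambda=(\lambda_1,\ldots,\lambda_n)^{T}$ lies in $\ker Y$. Indeed, because $D$ is a derivation, every nonzero bracket forces $\lambda_i+\lambda_j=\lambda_k$, so each row $y_{ij}^{k}=\epsilon_i^{T}+\epsilon_j^{T}-\epsilon_k^{T}$ of $Y$ satisfies $y_{ij}^{k}\cdot\lambda=0$. Since all $\lambda_i$ are strictly positive, $\lambda\neq 0$, and hence $\operatorname{rank}(Y)\le n-1$. Because $U=YY^{T}$ satisfies $\operatorname{rank}(U)=\operatorname{rank}(Y)$, invertibility of the $m\times m$ matrix $U$ forces $m\le n-1$.

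For the second bullet I would argue by contradiction: assume two triples in $\Lambda$ realize $\langle y_{i_1,j_1}^{k_1}, y_{i_2,j_2}^{k_2}\rangle=-1$. A short case analysis of how matching/non-matching indices contribute to this inner product shows that, under the ordered-type hypothesis, $-1$ can be achieved only when a target of one triple coincides with a source of the other, with the remaining indices distinct; after relabeling we may assume $k_1=i_2$, so the pair of triples fits the Jacobi pattern of Theorem \ref{theorem2} with $(i,j,l,k,m)=(i_1,j_1,k_1,j_2,k_2)$. The corresponding Jacobi equation reads
\begin{equation*}
\sum_{s<k_2}\bigl(\alpha_{i_1,j_1}^{s}\alpha_{s,j_2}^{k_2}+\alpha_{j_1,j_2}^{s}\alpha_{s,i_1}^{k_2}+\alpha_{j_2,i_1}^{s}\alpha_{s,j_1}^{k_2}\bigr)=0.
\end{equation*}
By distinctness of the $\lambda_i$, each $s$ in any given sum is uniquely determined by its eigenvalue, so the first sum contributes exactly the nonzero term $\alpha_{i_1,j_1}^{k_1}\alpha_{k_1,j_2}^{k_2}$. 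Jacobi therefore forces a cancelling nonzero term from one of the other two cyclic sums, which in turn forces the existence of an index $l$ with $\lambda_l=\lambda_{j_1}+\lambda_{j_2}$ (or the symmetric choice) together with two further triples $(j_1,j_2,l)$ and $(i_1,l,k_2)$ in $\Lambda$.

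A direct computation now produces the linear dependence
\begin{equation*}
y_{i_1,j_1}^{k_1}+y_{k_1,j_2}^{k_2}-y_{j_1,j_2}^{l}-y_{i_1,l}^{k_2}=0,
\end{equation*}
since both pairs sum to $\epsilon_{i_1}^{T}+\epsilon_{j_1}^{T}+\epsilon_{j_2}^{T}-\epsilon_{k_2}^{T}$. Hence $\operatorname{rank}(Y)<m$ and $U=YY^{T}$ is singular, the desired contradiction. The main obstacle is the bookkeeping in the case analysis: one must verify that every configuration of indices making the inner product equal to $-1$ (including the symmetric subcase $k_2\in\{i_1,j_1\}$, and after noting that both coincidences cannot occur simultaneously because of the strict orderings $i<j<k$) falls under the Jacobi pattern of Theorem \ref{theorem2}, and that whichever cyclic term cancels always produces triples in $\Lambda$ whose root vectors yield an analogous explicit relation. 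One must also dispense with the degenerate possibility that no valid $l$ exists in $\{1,\ldots,n\}$, but in that case Jacobi is outright violated and $\eta$ fails to be a Lie algebra, so this case never arises under the hypotheses.
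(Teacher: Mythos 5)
The paper does not actually prove this statement: Theorem \ref{invertible} is quoted from \cite{Kad} and used as a black box, so there is no internal proof to compare yours against. Judged on its own, your argument is correct. The first bullet is exactly right: the derivation property puts the eigenvalue vector $\lambda$ (nonzero, since all $\lambda_i>0$) in $\ker Y$, so $m=\operatorname{rank}(U)=\operatorname{rank}(Y)\leq n-1$. For the second bullet, the crux is your reduction of the value $-1$ to the chained pattern, and it does go through: writing the inner product as (source--source matches) $-$ (source--target matches) $+$ (target--target match), the only index configurations giving $-1$ are one source--target match alone, or one source--source plus two source--target matches, or a target--target plus two source--target matches; the last is killed by distinctness of indices within a triple, and the middle one forces $2\lambda_{i_1}=0$, which is excluded by \emph{positivity} of the eigenvalues rather than by the ordered-type hypothesis you invoke (ordered type is not assumed in this theorem, and you do not need it). From there, distinctness of the eigenvalues makes each cyclic Jacobi sum a single term, the Jacobi identity supplies the two extra triples, and the relation $y_{i_1,j_1}^{k_1}+y_{k_1,j_2}^{k_2}=y_{j_1,j_2}^{l}+y_{i_1,l}^{k_2}$ is a nontrivial dependence among rows of $Y$ (note $y_{i_1,j_1}^{k_1}$ cannot equal either vector on the right, since $k_1=l$ would force $i_1=j_2$ and $k_1=k_2$ is excluded), so $U=YY^{T}$ is singular; and if no admissible $l$ exists the Jacobi identity is violated outright. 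This matches the style of argument in \cite{Kad}; the only correction worth recording is the misattributed hypothesis noted above.
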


\section{Theory} \label{theory}

This section provides  some theorems and their proofs that allow us to consider fewer cases for our algoritm. 
The following theorem gives a pruning method while Gram matrix is noninvertible. 

\begin{theorem} \label{unique}

Let $\eta$ be an $n$ dimensional nilsoliton metric Lie algebra, and $U$ be the corresponding Gram matrix which is noninvertible.Then $Ker(Y^T)=Ker(U)$. Furhermore
all of the solutions of $Uv=[1]$ correspond to a unique derivation.
\end{theorem}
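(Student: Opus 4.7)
The plan is to split the statement into its two assertions and treat each in turn. The first claim, $Ker(Y^T) = Ker(U)$, is pure linear algebra and rests on the factorization $U = YY^T$. The inclusion $Ker(Y^T) \subseteq Ker(U)$ is immediate since $Y^T v = 0$ implies $Uv = Y(Y^T v) = 0$. For the reverse inclusion, I would observe that $Uv = 0$ gives $v^T U v = \|Y^T v\|^2 = 0$, forcing $Y^T v = 0$. The noninvertibility hypothesis is only used to make the statement non-vacuous: if $U$ were invertible, both kernels would be trivial.

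For the second assertion, I would take two solutions $v_1, v_2$ of $Uv = [1]$ and note that $v_1 - v_2 \in Ker(U) = Ker(Y^T)$ by the first part, so $Y^T v_1 = Y^T v_2$. Theorem \ref{theorem3} then identifies the nil-Ricci endomorphism associated with either solution as $-\tfrac{1}{2} Y^T v_i$, so the two Ricci endomorphisms coincide. To upgrade this to equality of the derivation $D = Ric - \beta Id$, I would invoke Theorem \ref{theorem1}(4), which gives $\beta = y_{ij}^k Ric$ for every $(i,j,k) \in \Lambda$; since $y_{ij}^k$ is a row of $Y$, this reads $\beta = -\tfrac{1}{2}\, y_{ij}^k Y^T v = -\tfrac{1}{2} (Uv)_r = -\tfrac{1}{2}$, independent of which solution is chosen. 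Consequently $\beta$ and $Ric$ agree for $v_1$ and $v_2$, and therefore so does the derivation $D$.

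The only bookkeeping that requires any care is matching the row of $U$ indexed by a triple $(i,j,k)$ with the corresponding entry of $[1]$, so that the identification $\beta = -\tfrac{1}{2}$ is read off consistently. Otherwise the argument is a direct composition of the identity $U = YY^T$ with the formulas already established in Theorems \ref{theorem3} and \ref{theorem1}, and I do not expect any serious obstacle.
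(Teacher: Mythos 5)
Your proposal is correct and follows the same overall skeleton as the paper: establish $Ker(U)=Ker(Y^T)$, deduce that any two solutions of $Uv=[1]$ differ by an element of $Ker(Y^T)$, conclude via Theorem \ref{theorem3} that the Ricci endomorphisms agree, and then pin down $\beta$ to get equality of $D=Ric-\beta Id$. The two places where you deviate are both improvements in rigor rather than changes of route. For the kernel equality, the paper argues $Rank(U)=Rank(Y)=Rank(Y^T)$ and appeals to rank--nullity, which by itself only gives $\dim Ker(U)=\dim Ker(Y^T)$; one still needs the inclusion $Ker(Y^T)\subseteq Ker(U)$ (immediate from $U=YY^T$) to conclude equality, and the paper leaves that implicit. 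Your positive-semidefiniteness argument $v^TUv=\|Y^Tv\|^2$ proves both inclusions directly and is self-contained. For the second assertion, the paper simply writes ``Using Theorem \ref{theorem1}, we have $\beta=\beta_0$,'' whereas you make the mechanism explicit: by Theorem \ref{theorem1}(4), $\beta=y_{ij}^kRic=-\tfrac{1}{2}(Uv)_r=-\tfrac{1}{2}$ for every solution, so $\beta$ is forced independently of the choice of $v$. This is exactly the computation the paper is tacitly relying on, and spelling it out is worthwhile.
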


\begin{proof}

Since rank of a matrix is equal to the rank of its Gram matrix, then $p=Rank(Y)=Rank(U)$. Let $U:\RR ^p \to \RR ^p$ and $Y^T:\RR^p \to \RR ^n$ denote the linear functions (with respect to the standart basis) that correspond to the Gram matrix $U$ and the transpose of the root matrix $Y$ respectively. Since $Rank(U)=Rank(Y)=Rank(Y^T)$ and by rank-nullity theorem, we have 
\begin{equation}
Ker(U)=Ker(Y^T).\label{ker}
\end{equation}
Let $v$ be a particular solution and $v_0$  be the last column of reduced row echelon matrix $[U, [1]]$. Then $v_0$ is also a solution of $Uv_0=[1]$. Therefore $U(v-v_0)=0$, i.e. $(v-v_0) \in Ker(U)$. Using equation (\ref{ker}), then $(v-v_0) \in Ker(Y^T)$. For the solution $v_0$, suppose that we denote $D_0$ for the Nikolayevski derivation, $Ric_0$ for the Ricci tensor, and $\beta_0$ for the soliton constant. Then using the equation (\ref{ricci}), we have 

\begin{equation}
(Ric-Ric_0)= \frac{1}{2}Y^t(v-v_0)=0.
\end{equation}
Then $Ric=Ric_0$. Using Theorem (\ref{theorem1}), we have $\beta=\beta_0$, which implies $D=D_0$.
\end{proof}
\begin{lemma} \label{lemma}
If  nilsoliton metric Lie algebra $\eta$ has ordered type of derivations $1<2<.....<n $, then it's index set $\Lambda$ consists of triples $(i,j,i+j)$.
\end{lemma}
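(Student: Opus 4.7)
The plan is that this lemma is essentially an immediate consequence of Lemma \ref{lemma1}, and the main task is just to verify that that lemma is applicable in the present setting. Under the ordered-type hypothesis, the algebra admits a symmetric derivation $D$ with the $n$ distinct positive eigenvalues $\lambda_i = i$, and in the nilsoliton situation we may diagonalize $D$ in a Ricci-eigenvector basis $B=\{X_i\}_{i=1}^n$ (since $D = \mathrm{Ric} - \beta\,\mathrm{Id}$ has the same eigenvectors as $\mathrm{Ric}$). This is exactly the hypothesis of Lemma \ref{lemma1}.

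First I would recall that, by the definition of $\Lambda = \Lambda(\eta_\mu, B)$, a triple $(i,j,k)$ lies in $\Lambda$ precisely when the structure constant $\alpha_{ij}^k$ is nonzero, so in particular $[X_i, X_j] \neq 0$. Next I would invoke part 1 of Lemma \ref{lemma1}: if $[X_i, X_j] \neq 0$, then there must exist $k \in \{1, \dots, n\}$ such that $\lambda_k = \lambda_i + \lambda_j$, and in that case $[X_i, X_j]$ is a scalar multiple of that unique $X_k$.

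Finally, I would substitute the ordered-type values $\lambda_m = m$ into the eigenvalue equation. The relation $\lambda_k = \lambda_i + \lambda_j$ becomes simply $k = i + j$, forcing any triple in $\Lambda$ to take the form $(i, j, i+j)$. As a consistency check, note that if $i + j > n$ there is no admissible $k$, and Lemma \ref{lemma1} then guarantees that $X_i$ and $X_j$ commute, so no such triple enters $\Lambda$; this matches the conclusion. There is no serious obstacle here — the only subtlety is confirming that the ordered-type hypothesis supplies all the ingredients (symmetry of $D$, distinct positive eigenvalues, eigenvector basis) needed to apply Lemma \ref{lemma1}, which we have done above.
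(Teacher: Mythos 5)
Your proof is correct, and it reaches the conclusion by a slightly different route than the paper. The paper derives the key relation $\lambda_i+\lambda_j=\lambda_k$ for $(i,j,k)\in\Lambda$ from Theorem \ref{theorem1}: the nilsoliton condition forces the eigenvalue vector $V_D$ to lie in $\ker Y$, so every root vector pairs to zero with $V_D$, giving $\lambda_i+\lambda_j-\lambda_k=0$; Lemma \ref{lemma1} is then cited only to note that $[X_i,X_j]$ is a multiple of $X_k$. You instead obtain the same relation directly from part 1 of Lemma \ref{lemma1}: since $(i,j,k)\in\Lambda$ means $\alpha_{ij}^k\neq 0$, the bracket $[X_i,X_j]$ is nonzero, and the dichotomy in that lemma forces the existence of (and, by distinctness of the eigenvalues, the uniqueness of) an index $k$ with $\lambda_k=\lambda_i+\lambda_j$. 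Both routes then finish identically by substituting $\lambda_m=m$. Your argument is, if anything, marginally more economical in its hypotheses: it uses only the existence of a symmetric ordered-type derivation with distinct positive eigenvalues (the hypothesis of Lemma \ref{lemma1}), not the full nilsoliton characterization of Theorem \ref{theorem1}; your preliminary check that the nilsoliton derivation $D=\mathrm{Ric}-\beta\,\mathrm{Id}$ is symmetric and simultaneously diagonalizable with $\mathrm{Ric}$ is exactly the verification needed to apply that lemma, and your remark about the case $i+j>n$ correctly accounts for why such pairs contribute nothing to $\Lambda$.
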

\begin{proof}
If $V_D=(\lambda_1,\lambda_2,...,\lambda_n)^T$ is the eigenvalue vector of $D$ with eigenvector basis $B=\{X_i\}_{i=1}^n$  for $\eta$, then by Theorem \ref{theorem1}, $V_D$ lies in the kernel of $Y$. Thus for each element $(i,j,k) \in \Lambda$, $\lambda_i+\lambda_j-\lambda_k=0$, i.e., $\lambda_i+\lambda_j=\lambda_k$. By Lemma \ref{lemma1}, $[X_i,X_j]=\lambda X_k$ for some $\lambda \in \RR$. Since $\lambda_i=i$ for all $i \in \{1,2,...,n\}$, $k=i+j$ and  $[X_i,X_j]=\lambda X_{i+j}$. 
Hence the index set for ordered type of derivations are of form $(i,j,i+j)$.
\end{proof} 
Next corollary describes the index triples $(i,j,k)$ and the Jacobi identity for algebras with ordered type of derivations.

\begin{corollary} \label{ijk}

The algebra $\eta $ is a Lie algebra if and only if for all pairs of form $(i,j,l)$ and $(l,k,m)$ or $(i,j,l)$ and $(k,l,m)$ in $\Lambda_B$ with $k\notin \{i,j\}$, and for all $m\geq max\{i+3,j+2,5\}$, the following equation holds. 

\begin{equation}
\displaystyle\sum_{3 \leq s <m,s\notin\{i,j,k\}} \alpha_{i,j}^s \alpha_{s,k}^m +\alpha_{j,k}^s \alpha_{s,i}^m+ \alpha_{k,i}^s \alpha_{s,j}^m=0
\end{equation}

If in addition $\lambda_i =i$ for $i=1,.....,n$, then the algebra $\eta$ is a Lie algebra iff for all pairs of form $(i,j,i+j)$ and $(i+j,k,i+j+k)$, or $(i,j,i+j)$ and $(k,i+j,i+j+k)$ in $\Lambda_B$ with $k\notin \{i,j,i+j\}$, and for all $m=i+j+k\geq max\{2i+2,j+2,6\}$, the equation

\begin{equation}
\displaystyle\sum_{4 \leq s <m,s\notin\{i,j\}} \alpha_{i,j}^s \alpha_{s,k}^m +\alpha_{j,k}^s \alpha_{s,i}^m+ \alpha_{k,i}^s \alpha_{s,j}^m=0
\end{equation}
holds.
\end{corollary}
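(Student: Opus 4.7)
The plan is to specialize Theorem \ref{theorem2} to the ordered-eigenvalue setting: the Jacobi equation itself does not change, but the admissible range of the outer index $m$ and of the summation variable $s$ tightens considerably. The proof amounts to unpacking Lemma \ref{lemma1} (and, for the second part, Lemma \ref{lemma}) to rule out summands that are automatically zero, together with a careful tally of the index inequalities implied by the ordered form of the triples.

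For the first statement I would start from the observation that a structure constant $\alpha_{p,q}^s$ can be nonzero only if $\lambda_s=\lambda_p+\lambda_q$; because the eigenvalues are strictly ordered and positive, this forces $s>\max(p,q)$ at the level of indices. Applied in turn to $\alpha_{i,j}^s$, $\alpha_{j,k}^s$, $\alpha_{k,i}^s$, this immediately yields $s\notin\{i,j,k\}$, and combined with $i\ge 1$ and $j\ge 2$ it gives $s\ge 3$. The $m$-bounds follow from the chains $i<j<l$ together with either $l<k<m$ or $k<l<m$: from $m>l\ge j+1$ I read off $m\ge j+2$, combining this with $l>i$ upgrades to $m\ge i+3$, and the bound $m\ge 5$ is realised by the minimal configuration $(i,j,l,k)=(1,3,4,2)$ paired with $(k,l,m)$, after verifying that $(1,2,3)$ paired with $(3,k,m)$ already forces $m\ge 7$.

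For the second statement I would additionally invoke Lemma \ref{lemma}: under $\lambda_i=i$ the triples have the form $(i,j,i+j)$, so $l=i+j$ and $m=i+j+k$. The admissible $s$-values collapse to the set $\{i+j,\,j+k,\,i+k\}$, whose minimum, under $i\ge 1$, $j\ge 2$, $k\ge 1$ and $k\notin\{i,j\}$, is at least $4$, yielding $s\ge 4$. The exclusion shrinks to $s\notin\{i,j\}$ because $s=k$ would force $k\in\{i+j,j+k,i+k\}$, and only $k=i+j$ can actually occur, which is ruled out by the added hypothesis $k\notin\{i,j,i+j\}$. Finally the $m$-inequalities read directly off $m=i+j+k$: from $j\ge i+1$ and $k\ge 1$ we get $m\ge 2i+2$; from $i+k\ge 2$ we get $m\ge j+2$; and $m\ge 6$ is realised by the minimal configuration $(i,j,l,k)=(1,3,4,2)$, which gives $m=6$.

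The main obstacle, such as it is, lies in verifying that the two orderings $(l,k,m)$ and $(k,l,m)$ of the companion triple produce the same reduced equation once the skew-symmetry $\alpha_{p,q}^r=-\alpha_{q,p}^r$ is taken into account, and in confirming that the $m$- and $s$-thresholds are sharp simultaneously across both cases. These are routine case inspections rather than substantive arguments, so I do not expect any genuine difficulty beyond the bookkeeping.
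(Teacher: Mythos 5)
Your overall route is the same as the paper's: specialize Theorem \ref{theorem2}, use Lemma \ref{lemma1} to obtain $i<j<l<m$, $k<m$ and $k\notin\{i,j\}$ (hence $m\geq i+3$, $m\geq j+2$, $m\geq 5$), and use Lemma \ref{lemma} to substitute $l=i+j$, $m=i+j+k$ in the ordered case. The genuine gap is in your justification of the lower bound $s\geq 4$ in the second equation. You claim that the minimum of $\{i+j,\,j+k,\,i+k\}$ under the stated constraints is at least $4$, but this is false: taking $i=1$, $j=2$, $k\geq 4$ gives $s=i+j=3$ for the product $\alpha_{12}^3\alpha_{3k}^m$ (and likewise $i+k=3$ is attained at $i=1$, $k=2$, $j\geq 4$), and such terms genuinely occur --- the paper's own Corollary \ref{cor1} contains $\alpha_{12}^3\alpha_{34}^7$, $\alpha_{12}^3\alpha_{35}^8$ and $\alpha_{12}^3\alpha_{36}^9$, all with $s=3$. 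The paper's proof never attempts to establish $s\geq 4$; it only derives $s\geq 3$ from the fact that $X_1,X_2\in[\eta_{\mu},\eta_{\mu}]^{\perp}$, and the ``$4\leq s$'' in the displayed statement is itself inconsistent with Corollary \ref{cor1}. So at this step you have supplied an incorrect argument for a bound that cannot hold; the correct lower bound is $s\geq 3$.

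Two lesser points. Your assertion that $(1,2,3)$ paired with $(3,k,m)$ ``already forces $m\geq 7$'' is only valid in the ordered case where $m=i+j+k$; in the first statement the eigenvalues are arbitrary distinct positive reals and such a pair only forces $m\geq 5$. This does not affect the conclusion, but the paper's derivation of $m\geq 5$ is cleaner and fully general: $i$, $j$, $l$, $k$ are four pairwise distinct positive integers each less than $m$. On the other hand, your reduction of the exclusion set to $s\notin\{i,j\}$ in the ordered case (because $s=k$ could only arise from $k=i+j$, which the hypothesis excludes) is correct and fills in a step the paper leaves implicit.
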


\begin{proof}
\noindent By Theorem 7 of \cite{Paynea}, the algebra $\eta_{\mu}$ defined by $\mu$ is a Lie algebra if and only if whenever there exists $m$ so that $<y_{ij}^l,y_{lk}^m>=-1$ for triples $(i,j,l)$ and $(l,k,m)$ or $(k,l,m)$ in $\Lambda_B$, the Equation \ref{JI}

\begin{equation}
\displaystyle\sum_{s<m} \alpha_{i,j}^s \alpha_{s,k}^m +\alpha_{j,k}^s \alpha_{s,i}^m+ \alpha_{k,i}^s \alpha_{s,j}^m=0
\end{equation}
holds. Furthermore , if $i$,$j$, and $k$ are distinct, the product $\alpha_{ij}^l\alpha_{lk}^m$ is nonzero  if and only if $<y_{ij}^l,y_{lk}^m>=-1$.
\newline

\noindent Suppose that $<y_{ij}^l,y_{lk}^m>=-1$ for $(i,j,l)\in\Lambda_B$, and $(l,k,m)$ or $(k,l,m)$ in $\Lambda_B$. By definition of $\Lambda_B$, we have $i<j$. By Lemma \ref{lemma1}, $j<l$, $l<m$ and $k<m$. Since $i<j<l<m$, we know that $m\geq i+3$. Similarly, $j<l<m$ implies that $j+2 \leq m$. If $i=k$ or $j=k$, then $<y_{ij}^l,y_{lk}^m>=0$, so $i,j$ and $k$ must be distinct. Since $i,j,k$ and $l$ are all distinct and less than $m$, we know $m \geq 5$. Thus an expression of form 
\begin{equation}
\alpha_{i,j}^s \alpha_{s,k}^m +\alpha_{j,k}^s \alpha_{s,i}^m+ \alpha_{k,i}^s \alpha_{s,j}^m 
\end{equation}
is nonzero only if $m\geq max\{i+3,j+2,5\}$, and $k \notin \{i,j\}$.

Suppose that $\lambda_i=i$, and $(i,j,l)$ and $(l,k,m)$ are in the index set. Then from Lemma \ref{lemma}, $l=i+j$ which implies $m=i+j+k$. We know that $1 \leq i<j<l<m$. Then, since $j \geq i+1$ and $k \geq 1$, we have $2i+2 \leq i+j+k=m$. Since $2i+2 \leq m$, $m=5$ implies $i=1$. So there is no possible $(i,j,k)$, where all $i,j,k$ are distinct and $i<j<m$ with $i+j+k$. Thus if $m=5$, then $<y_{ij}^l,y_{lk}^m> \neq -1$. Therefore, if  $\lambda_i=i$, an expression of form 
\begin{equation}
\alpha_{i,j}^s \alpha_{s,k}^m +\alpha_{j,k}^s \alpha_{s,i}^m+ \alpha_{k,i}^s \alpha_{s,j}^m  \label{expression}
\end{equation}
is nonzero only if $m\geq max\{2i+2,j+2,6\}$, and $k \notin \{i,j\}$.

By Lemma \ref{lemma1}, $X_1$ and $X_2$ are in $[\eta_{\mu},\eta_{\mu}]^{\bot}$, so $\alpha_{rt}^s\neq 0$ which implies $s \geq 3$ and $r\neq s$, $t \neq s$. Therefore all expressions in Equation \ref{expression} with $s<3$ or $s \in \{i,j,k\}$ are identically zero and may be omitted from the summation for any $m$.
\end{proof}
Next corollary describes some equations in the structure constants of a nilpotent metric Lie algebra that are equivalent to the Jacobi identity. All the terms $\alpha_{ij}^s\alpha_{sk}^m$ in the following equations leads $-1$ entry in $U$. Therefore, following equations are useful for noninvertible case since there is no $-1$ entry in the Gram matrix for the invertible case.

\begin{corollary} \label{cor1}
Let $(\eta,<,>)$ be an $n$-dimensional inner product space where $n\leq 9$, and $\mu$ be an element of $\Lambda^2n*\otimes n$. Suppose that the algebra $\eta_{\mu}$ defined by $\mu$ admits a symmetric derivation $D$ having $n$ eigenvalues $1<2<...<n$ with corresponding orthonormal eigenvectors $X_1, X_2,...X_n$. $\alpha_{i,j}^k$ denote the structure constants for $\eta$ with respect to the ordered basis $B=\{X_i\}_{i=1}^n$, and let $\lambda_B$ index the nonzero structure constants as defined in Equation (\ref{JI}).
The algebra $\eta_{\mu}$ is a Lie algebra if and only if 

\begin{eqnarray}
&\alpha_{13}^4 \alpha_{42}^6+\alpha_{23}^5 \alpha_{51}^6=0,\hspace{2mm}, and \label{1st}\\ 
&\alpha_{12}^3 \alpha_{3,4}^7 -\alpha_{14}^5 \alpha_{52}^7-\alpha_{24}^6 \alpha_{61}^7=0\hspace{2mm}, and \label{2nd}\\ 
&\alpha_{12}^3 \alpha_{35}^8+\alpha_{14}^5 \alpha_{53}^8+\alpha_{15}^6 \alpha_{62}^8+ \alpha_{25}^7\alpha_{71}^8 + \alpha_{34}^7\alpha_{71}^8 =0 \hspace{2mm} ,and \label{3rd} \\ 
&\alpha_{12}^3 \alpha_{36}^9 +\alpha_{13}^4 \alpha_{45}^9 -\alpha_{23}^5 \alpha_{45}^9 -\alpha_{24}^6 \alpha_{36}^9 -\alpha_{34}^7 \alpha_{27}^9 -\alpha_{35}^8 \alpha_{18}^9 =0. \label{4th}
\end{eqnarray}

holds.
\end{corollary}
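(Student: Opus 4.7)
The plan is to apply Corollary \ref{ijk} systematically for every value of $m$ compatible with $n\leq 9$. The admissibility bound $m\geq\max\{2i+2,\,j+2,\,6\}$ together with $m\leq 9$ leaves only the four cases $m\in\{6,7,8,9\}$. For each $m$, I would enumerate the admissible triples $(i,j,k)$ with $i<j$, $k\notin\{i,j,i+j\}$, and $i+j+k=m$, and then write down the Jacobi relation furnished by Corollary \ref{ijk}. By Lemma \ref{lemma}, the intermediate index $s$ in each product $\alpha_{a,b}^{s}\alpha_{s,c}^{m}$ is forced to equal $a+b$, so the summation in Corollary \ref{ijk} collapses to at most three explicit products of the form
\[
\alpha_{i,j}^{i+j}\alpha_{i+j,k}^{m}+\alpha_{j,k}^{j+k}\alpha_{j+k,i}^{m}+\alpha_{k,i}^{k+i}\alpha_{k+i,j}^{m}=0,
\]
with the understanding that a summand vanishes whenever an intermediate sum coincides with an outer index.

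A direct enumeration identifies the admissible unordered triples as $\{1,2,3\}$ for $m=6$; $\{1,2,4\}$ for $m=7$; $\{1,2,5\}$ and $\{1,3,4\}$ for $m=8$; and $\{1,2,6\}$, $\{1,3,5\}$, $\{2,3,4\}$ for $m=9$. Different orderings $(i,j,k)$ of the same unordered set produce the same Jacobi relation up to sign, since the cyclic sum above is cyclically symmetric and antisymmetric under a transposition once one invokes $\alpha_{r,s}^{t}=-\alpha_{s,r}^{t}$. Rewriting every product $\alpha_{r,s}^{t}$ so that $r<s$ and collecting the resulting relations for each fixed $m$ produces the four displayed equations (\ref{1st})--(\ref{4th}). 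The converse implication follows immediately from Corollary \ref{ijk}, because the four equations then exhaust all non-trivial Jacobi constraints available in the ordered-type setting with $n\leq 9$.

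The main obstacle is the combinatorial bookkeeping. For each admissible unordered triple one must determine which of the three cyclic summands actually survives, and for $m=8$ and $m=9$ one must combine the relations arising from the several unordered triples that share the same $m$, keeping the signs from $\alpha_{s,r}^{t}=-\alpha_{r,s}^{t}$ consistent so that identical products from different Jacobi relations are aggregated correctly into the single equation listed for that value of $m$. Once this enumeration and sign analysis are carried out carefully for each of the four values of $m$, matching the collected Jacobi identities to equations (\ref{1st})--(\ref{4th}) is a mechanical verification.
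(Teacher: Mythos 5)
Your proposal follows essentially the same route as the paper: invoke Lemma \ref{lemma} to force the intermediate index $s=i+j$, use the bounds of Corollary \ref{ijk} to restrict to $m\in\{6,7,8,9\}$, enumerate the admissible $(i,j,k)$ with $i+j+k=m$ for each $m$, and collect the surviving products into the four displayed equations; the paper carries this out explicitly via a table for $m=6$ and the triple list for $m=7$, then asserts the $m=8,9$ cases "by the same way," which your enumeration of the unordered triples $\{1,2,5\},\{1,3,4\}$ and $\{1,2,6\},\{1,3,5\},\{2,3,4\}$ fills in. The only caveat, inherited from the paper rather than introduced by you, is that for $m=8$ and $m=9$ the distinct unordered triples give logically independent Jacobi relations, so presenting their aggregate as a single equation per $m$ deserves the careful sign bookkeeping you flag.
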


\begin{proof}
Following the Lemma \ref{lemma}, the index set consists of elements of form $(i,j,i+j)$. Therefore the number $s$ equals to $i+j$ in the expression $\alpha_{i,j}^s \alpha_{s,k}^m$. From the previous corollary,
\begin{equation}
\alpha_{i,j}^s \alpha_{s,k}^m +\alpha_{j,k}^s \alpha_{s,i}^m+ \alpha_{k,i}^s \alpha_{s,j}^m  
\end{equation}
is nonzero only if $m\geq 6$, and $k \notin \{i,j\}$. Also $k \neq i+j$, otherwise  $\alpha_{s,k}^m = 0$.

If $m=6$, $2i+2 \leq m$ implies that $i \leq 2$, i.e. possible numbers for "$i$" are $1$ and $2$.
\
\begin{table}[ht]
\centering  
\begin{tabular}{|c|c|c|c|c|} 
 \hline                       
Case & i & j & k &PT\\ [0.5ex] 
\hline                  
a) & 1 & 2 & 3 & - \\ 
\hline
b) & 1 & 3 & 2 & \checkmark \\
\hline
c) & 1 & 4 & 1 & - \\
\hline
d) & 2 & 3 & 1 & \checkmark \\
\hline
e) & 2 & 4 & 0 & - \\ [1ex]      
\hline 
\end{tabular}
\caption{Possible (i,j,k) for m=6} 
\label{table1} 
\end{table}
Possible and not possible $(i,j,k)$ triples, which are being used in 
\begin{equation}
\displaystyle\sum_{3 \leq s <m,s\notin\{i,j,k\}} \alpha_{i,j}^s \alpha_{s,k}^m +\alpha_{j,k}^s \alpha_{s,i}^m+ \alpha_{k,i}^s \alpha_{s,j}^m=0
\end{equation}
and where $i+j+k=6$ are illustrated in Table \ref{table1}. The notation in the table is as follows: $\checkmark :=Yes$, $-:= No$, $PT:=$ Possible Triple.
\newline  
 
\noindent In the case a), $i+j=k$, then it is  not a possible triple. 
In the case b), $i<j$, $i,j,k$ are distinct, and $k \neq i+j$. So it is a possible triple.
In the case c), $i=k$, so it is not a possible triple. 
In the case d), $i<j$, all $i,j,k$ are distinct $i+j \neq k$. Thus it is a possible triple.
In the case e), $k$ is not a natural number, so it is not a possible triple.
Therefore only possible  $(i,j,k)$ triples are $(1,3,2)$ and $(2,3,1)$.
Triples $(1,3,2)$  and $(2,3,1)$ corresponds to nonzero products $\alpha_{13}^4\alpha_{42}^6$ and $\alpha_{23}^5\alpha_{5,1}^6$ respectively.  Using the skew-symmetry, the Equation (\ref{JI}) turns into following equation:
\begin{equation}
\alpha_{13}^4 \alpha_{24}^6+\alpha_{23}^5 \alpha_{15}^6=0,
\end{equation}
which gives Equation (\ref{1st}).
Using the same procedure for $m=7$, possible $(i,j,k)$ triples are $(1,2,4),(1,4,2)$ and $(2,4,1)$ which leads $\alpha_{12}^3\alpha_{34}^7$, $\alpha_{14}^5\alpha_{52}^7$ and $\alpha_{24}^6\alpha_{61}^7$ respectively. Therefore the Equation (\ref{2nd}) is obtained. Equations (\ref{3rd}) and (\ref{4th}) can be obtained by the same way. 
\end{proof}

As an illustration, we show how to use the results of this section in the following example. 
\begin{example} 
Let $\eta$ be an $8$-dimensional algebra with nonzero structure constants relative to eigenvector basis $B$ indexed by 
\begin{equation}
\Lambda=\{(1,2,3), (1,3,4), (1,4,5), (1,6,7), (2,3,5), (2,6,8), (3,4,7), (3,5,8).\} 
\end{equation}

Computation shows that the structure vector $[\alpha ^2]$ is a solution  to $Uv=[1]_{8 \times 1}$ if and only if it is of form 
\begin{equation*}
[\alpha ^2]=\
\begin{pmatrix}
(\alpha_{12}^3)^2\\
(\alpha_{13}^4)^2\\
(\alpha_{14}^5)^2\\
(\alpha_{16}^7)^2\\
(\alpha_{23}^5)^2\\
(\alpha_{26}^8)^2\\
(\alpha_{34}^7)^2\\
(\alpha_{35}^8)^2
\end{pmatrix} =\
\begin{pmatrix}
-3/17\\
10/17\\
15/17\\
-8/17\\
-5/17\\
12/17\\
7/17\\
0\\
0
\end{pmatrix}+X.\
\begin{pmatrix}
1\\
0\\
-1\\ 
0\\
0\\
-1\\
0\\
1\\ 
0
\end{pmatrix}+Y.
\begin{pmatrix}
1\\
-1\\
-1\\ 
1\\
1\\
-1\\
-1\\
0\\ 
1
\end{pmatrix}
\end{equation*}

Equation (\ref{3rd}) from the previous corollary leads 
\begin{eqnarray}
\begin{gathered}
(-\frac{3}{17}+X+Y)\cdot Y=(\frac{15}{17}-X-Y)\cdot Y\\ 
\Rightarrow X+Y=\frac{9}{17}.
\end{gathered}
\end{eqnarray} 
Moreover, using Equation (\ref{2nd}), we find $X=\frac{3}{17}$ and $Y=\frac{6}{17}$, which means $(\alpha_{16}^7)^2=-\frac{2}{17}$. Thus $\eta$ is not a Lie algebra. 
\end{example}


\section{Algorithm and Classifications} \label{alg}
In this section, we  describe our computational procedure and give the results in dimension 8 and 9.
\newline

\subsection{Algorithm}
Now we describe the algorithm.
The following algorithm can be used for both invertible and noninvertible case. 

{\bf Input.} The input is the integer $n$ which represents the dimension.
\newline

{\bf Output.} Outputs Wsoliton and Uninv listing characteristic vectors for index sets of $\Lambda$ of $\Theta_n$. The matrix Wsoliton has as its rows all possible characteristic vectors for canonical index sets $\Lambda$ for nilpotent Lie algebras of dimension $n$ with ordered type nonsingular nilsoliton derivation whose canonical Gram matrix $U$ is invertible. The matrix Uninv has as its rows all possible characteristic vectors for canonical index sets $\Lambda$ for nilpotent Lie algebras of dimension $n$ with ordered type nonsingular nilsoliton derivation whose canonical Gram matrix $U$ is noninvertible. In the dimensions $8$ and $9$ there is no example for invertible case. Thus Wsoliton is an empty matrix. Therefore we give the algorithm for the noninvertible case.
\newline

\noindent \underline{\bf Algorithm for the noninvertible Case} \\
\begin{itemize}
\item  Enter the dimension $n$.
\item Compute the matrix $Z_n$.
\item Compute the matrix $W$.
\item Delete all rows of $W$ containing abelian factor which is the row that represents direct sums of Lie algebras. 
\item Remove all rows of $W$ such that the canonical Gram matrix $U$ associated to the index set $\Lambda$ is invertible.
\item Define eigenvalue vector $v_D=\begin{pmatrix} 1 & 2 & 3 & ....& n \end{pmatrix} ^T$ in dimension $n$.
\item Remove all rows of $W$ if $v(i)=v_0(i) \leq 0$ where $v$ is the general solution of $U_{\Lambda} v=[1]_{m \times 1}$ and $v_0$ is the vector that we have defined in the proof of Theorem \ref{unique}.
\item Remove all the rows of $W$ such that the corresponding algebra does not have a derivation of eigenvalue type $1<2<...<n$. 
\item Remove all the rows of $W$ such that the corresponding algebra does not satisfy Jacobi identity condition, which is obtained in Corollary \ref{cor1}.
\end{itemize}

After this process, we solve nonlinear systems which follows from Jacoby identity. In order to see how the algorithm works, we give the following example for n=6.

\begin{example}
Let $n=6$. Then 
\begin{equation*}
\Theta_6=\{(1,2,3),(1,3,4),(1,4,5),(1,5,6),(2,3,5),(2,4,6)\}
\end{equation*}
So, matrix $Z_6$ is $6 \times 3$ of form
\begin{equation*}
Z_6=\begin{pmatrix}
1&2&3\\
1&3&4\\
1&4&5\\
1&5&6\\
2&3&5\\
2&4&6
\end{pmatrix}
\end{equation*}
Since $ |\Theta_6|=6$, the matrix $W$ is of size $2^6 \times 6$:
\begin{equation*}
W_{\Theta}=\begin{pmatrix}
0&0&0&0&0&0\\
0&0&0&0&0&1\\
0&0&0&0&1&0\\
.&.&.&.&.&.\\
.&.&.&.&.&.\\
.&.&.&.&.&.\\
1&1&1&1&1&1
\end{pmatrix}
\end{equation*}
The first row of $W_{\theta}$ represents empty matrix, row two represents the subset $\{(2,4,6)\}$ of $\Theta_6$, etc. 
Eliminating rows that represents direct sums, we have 33 rows in $W$ matrix. Therefore these new $W$ does not include Lie algebras that can be written as direct sums. These algebras corresponds both invertible and noninvertible gram matrices. There is no example for the invertible case. For the noninvertible case, there is one ordered type nilsoliton metric Lie algebra $\eta$. If $B$ is the eigenvector basis for $\eta$, whose nonzero structure constants are indexed by
\begin{equation*}
\Lambda_{\eta}=\{(1,2,3), (1,3,4), (1,4,5), (1,5,6), (2,3,5), (2,4,6)\}
\end{equation*}
where $B$ is the eigenvector basis for $\eta$. Computation shows that the structure vector $[\alpha ^2]$ is a solution  to $Uv=[1]_{6 \times 1}$ if and only if it is of form 
\begin{equation*}
[\alpha ^2]=\
\begin{pmatrix}
(\alpha_{12}^3)^2\\
(\alpha_{13}^4)^2\\
(\alpha_{14}^5)^2\\
(\alpha_{15}^6)^2\\
(\alpha_{23}^5)^2\\
(\alpha_{24}^6)^2
\end{pmatrix} =\frac{1}{143}\
\begin{pmatrix}
2\\
1\\
2\\
5\\
5\\
0
\end{pmatrix}+\frac{t}{143}\
\begin{pmatrix}
0\\
1\\
0\\ 
-1\\
-1\\
1
\end{pmatrix}
\end{equation*}.
By Corollary \ref{cor1}, $\eta$ satisfies Equation \ref{1st}. Solving the equation for $t$, we find that 
\begin{eqnarray}
(\alpha_{13}^4\alpha_{24}^6)^2&=&(\alpha_{23}^5\alpha_{15}^6)^2 \nonumber\\
(11+t)t&=&(55-t)^2 \nonumber\\
t&=&25. \nonumber
\end{eqnarray} 
After rescaling and solving for structure constants from $[\alpha ^2]$, we see that letting 
\begin{equation}
\begin{gathered}
 \left[X_1, X_2\right]=\sqrt{22} X_3, \hspace{3mm} [X_1, X_3]=6 X_4, \hspace{3mm} [X_1, X_4]=\sqrt{22} X_5, \nonumber \\
[X_1, X_5]=\sqrt{30} X_6, \hspace{3mm} [X_2, X_3]=\sqrt{30} X_5, \hspace{3mm} [X_2, X_4]= 5 X_6,  \nonumber
\end{gathered}
\end{equation}
defines a nilsoliton metric Lie algebra, previously found in \cite{Will}.
\end{example}

\subsection{Classifications} \label{class}

Classification results for dimensions 8 and 9 appear in Table \ref{tab:8DimensionalNilsolitonMetricLieAlgebras} and Table \ref{tab:9DimensionalNilsolitonMetricLieAlgebras}  respectively. We use vector notation to represent Lie algebra structures. For example, the list 
\begin{equation}
(0,0,0,0,\sqrt{45}. 23, \sqrt{14}. 24, \sqrt{91}. 25+ \sqrt{91}. 34, \sqrt{136}. 17+ \sqrt{29}. 26+ \sqrt{14}. 35, \sqrt{104}. 18+ 4. 27) \nonumber
\end{equation}
in the first row of Table \ref{tab:9DimensionalNilsolitonMetricLieAlgebras} is meant to encode the metric Lie algebra $(n,<,>)$ with orthonormal basis $B= \{ X_i \}_{i=1}^8$ and bracket relations 
\begin{equation}
\begin{gathered}
\hspace{1mm} [X_2 , X_3] =\sqrt{45} X_5 \hspace{1cm} [X_2 , X_4]=\sqrt{14} X_6 \hspace{1cm} [X_2 , X_5]=\sqrt{91} X_7 \nonumber \\
[X_3 , X_4]= \sqrt{91} X_7 \hspace{1cm} [X_1 , X_7]=\sqrt{136} X_8 \hspace{1cm} [X_2 , X_6]=\sqrt{29} X_8 \nonumber \\
[X_3 , X_5]=\sqrt{14} X_8 \hspace{1cm} [X_1 , X_8]=\sqrt{104} X_9 \hspace{1cm} [X_2 , X_7]= 4 X_9. 
\end{gathered}
\end {equation} 
\
 
\begin{table}[ht]
\tiny
	\centering
		\begin{tabular}{|l|l|l|l|}
\hline
\multicolumn{1}{|c}{}&
\multicolumn{1}{|c}{Lie Bracket}&
\multicolumn{1}{|c}{index }&
\multicolumn{1}{|c|}{Nullity}\\\hline
1  & (0,0,$\sqrt{\frac{274}{2223}}$.12, $\sqrt{\frac{99}{764}}.13$, $\sqrt{\frac{527}{8179}}.14$+ $\sqrt{\frac{1532}{9311}}.23$, $\sqrt{\frac{101}{2154}}$.15+ $\sqrt{\frac{250},{4199}}.24$, $\sqrt{\frac{150}{3151}}$.16 + $\sqrt{\frac{110}{367}}$.25+ $\sqrt{\frac{110}{367}}$.34, $\sqrt{\frac{7}{17}}$.17) & 6 & 3  \\\hline
2  & (0,0,$\sqrt{\frac{82}{7253}}$.12, $\sqrt{\frac{4}{34}}.13$, $\sqrt{\frac{97}{299}}.14$+ $\sqrt{\frac{6}{34}}.23$, $\sqrt{\frac{1}{34}}$.15+ $\sqrt{\frac{9}{34}}.24$, $\sqrt{\frac{10}{34}}$.16 + $\sqrt{\frac{1998}{12097}}$.25+ $\sqrt{\frac{397}{13917}}$.34, $\sqrt{\frac{728}{2883}}$.17 + $\sqrt{\frac{637}{4000}}$.35) & 6 & 4  \\\hline
3  & (0,0,$\sqrt{\frac{163}{702}}$.12, $\sqrt{\frac{263}{2572}}.13$, $\sqrt{\frac{343}{1334}}.14$+ $\sqrt{\frac{547}{2871}}.23$, $\sqrt{\frac{175}{2358}}$.15+ $\sqrt{\frac{160}{1149}}.$24, $\sqrt{\frac{388}{2509}}$.16 + $\sqrt{\frac{518}{6185}}$.25, $\sqrt{\frac{43}{13870}}$.17 + $\sqrt{\frac{915}{2239}}$.35) & 6 & 3  \\\hline
\end{tabular}
\caption{8-dimensional nilsoliton metric Lie algebras }
\label{tab:8DimensionalNilsolitonMetricLieAlgebras}
\end{table}
\
\begin{table}[ht]
\tiny
	\centering
		\begin{tabular}{|l|l|l|l|}
\hline
\multicolumn{1}{|c}{}&
\multicolumn{1}{|c}{Lie Bracket}&
\multicolumn{1}{|c}{index }&
\multicolumn{1}{|c|}{Nullity}\\\hline
1  & (0,0,0,0,$\sqrt{45}$.23, $\sqrt{14}.24$, $\sqrt{91}.25$+ $\sqrt{91}.34$, $\sqrt{136}$.17+ $\sqrt{29}.26$+ $\sqrt{14}.35$, $\sqrt{104}.18$+ 4.27) & 4 & 1  \\\hline
2  & (0,0,1.12,0,1.14+$\sqrt{3}$.23, 0, $\sqrt{6}.25$+ $\sqrt{6}$.34, $\sqrt{8}$.17+ 1.26+ $\sqrt{2}$.35, $\sqrt{6}.18$+ $\sqrt{2}$27) & 5 & 2  \\\hline
3  & (0,0,$\sqrt{21}$.12,0,$\sqrt{21}$.14+$\sqrt{39}$.23, 0, $\sqrt{13}$.16+ $\sqrt{70}$.25-$\sqrt{70}$.34, $\sqrt{88}$.17+ $\sqrt{42}$.35+, $\sqrt{65}$.18+ $\sqrt{39}$ .27) & 5 & 2  \\\hline
\end{tabular}
\caption{9-dimensional nilsoliton metric Lie algebras of nullity 1 and 2}
\label{tab:9DimensionalNilsolitonMetricLieAlgebras}
\end{table}

\subsubsection{Candidates of Nilsoliton Metrics }

Table \ref{tab:Numberof9DimensionalPossibleNilsolitonMetricLieAlgebras} illustrates how many possible candidates of Lie algebras appear in dimension $9$ up to the nullity of its Gram matrix. The algebras that illustrated in  Table \ref{tab:8DimensionalPossibleNilsolitonMetricLieAlgebras} are possible candidates of nilsoliton metric Lie algebras with ordered type of derivations in dimension $8$.  Here, as an example we give $Nullity = 3$,  $6$ and $8$ case in Tables \ref{tab:9dimensionalcandidatesofnullity6}, \ref{tab:9DimensionalPossibleNilsolitonMetricLieAlgebras2}, and \ref{tab:9DimensionalPossibleNilsolitonMetricLieAlgebras3} for dimension {\em nine}.
\
\begin{table}[ht]
\tiny
	\centering
		\begin{tabular}{|l|l|l|l|}
\hline
\multicolumn{1}{|c}{}&
\multicolumn{1}{|c}{Lie Bracket}&
\multicolumn{1}{|c}{index }&
\multicolumn{1}{|c|}{Nullity}\\\hline
1  & (0,0,1.12, 1.13, 1.14 + 1.23, 1.15+ 1.24, 1.16 + 1.34, 1.17 + 1.26 + 1.35) & 6 & 4  \\\hline
2  & (0,0,1.12, 1.13, 1.14 + 1.23, 1.15+ 1.24, 1.16 + 1.25, 1.17 + 1.26 + 1.35) & 6 & 4  \\\hline
3  & (0,0,1.12, 1.13, 1.14 + 1.23, 1.15+ 1.24, 1.16 + 1.25 + 1.34, 1.17 + 1.26) & 6 & 4  \\\hline
4  & (0,0,1.12, 1.13, 1.14 + 1.23, 1.15+ 1.24, 1.16 + 1.25 + 1.34, 1.17 + 1.26 + 1.35) & 6 & 5  \\\hline
\end{tabular}
\caption{8-dimensional nilsoliton metric Lie algebra candidates}
\label{tab:8DimensionalPossibleNilsolitonMetricLieAlgebras}
\end{table}

\section{Conclusion}
\label{conc}
In this work we have focused on nilpotent metric Lie algebras of dimension eight, and nine with ordered type of 
derivations. We have given specific Jacobi identity conditions for Lie algebras which  allowed us to simplify the 
Jacobi identity condition. 
We have classified nilsoliton metric Lie algebras for the corresponding Gram matrix $U$ being  invertible and
 noninvertible.  For Dimension 8, we have focused on Nilsoliton metric Lie algebras with uninvertable Gram matrix which 
leads to more than one solution for $Uv=[1]$. We have proved that if the nilpotent Lie algebra admits a soliton metric
 with corresponding Gram matrix  being noninvertible, all the solutions of $Uv=[1]$ corresponds to a unique derivation.
This theorem has allowed us to omit
several cases that come from non ordered eigenvalues without considering Jacobi condition. 
 Moreover, we have classified some nilsoliton metric Lie algebras with derivation  types $1<2<...<n$ and provided
 some candidates that may be classified. We are currently working on an algorithm that provides a full
list of classifications for dimensions {\em eight} and {\em nine}.

\section{Appendix} 


\begin{table}[ht]
\tiny
	\centering
		\begin{tabular}{|l|l|}
\hline
\multicolumn{1}{|c}{Nullity }&
\multicolumn{1}{|c|}{Number of Lie algebras}\\\hline
3  & 98 \\\hline
4  & 81 \\\hline
5  & 45 \\\hline
6  & 22 \\\hline
7  & 7  \\\hline
8  & 1  \\\hline
\end{tabular}
\caption{\small{Number of 9-dimensional  nilsoliton metric Lie algebra candidates}}
\label{tab:Numberof9DimensionalPossibleNilsolitonMetricLieAlgebras}
\end{table}


\begin{table}[ht]
\tiny
	\centering
\begin{tabular}{|l|l|l|l|}
\hline
\multicolumn{1}{|c}{}&
\multicolumn{1}{|c}{Lie Bracket}&
\multicolumn{1}{|c}{index }&
\multicolumn{1}{|c|}{Nullity}\\\hline
1 & (0, 0, 1.12, 1.13, 1.14+ 1.23, 0, 1.16+ 1.25+ 1.34, 1.17+ 1.26+ 1.35, 1.18+ 1.27+ 1.36+ 1.45) & 6 & 6  \\\hline
2 & (0, 0, 1.12, 1.13, 1.14, 1.15, 1.16+ 1.25+ 1.34, 1.17+ 1.26+ 1.35, 1.18+ 1.27+ 1.36+ 1.45) & 7 & 6  \\\hline
3 & (0, 0, 1.12, 1.13, 1.14+ 1.23, 1.15+ 1.24, 1.16+ 1.34, 1.17+ 1.35, 1.18+ 1.27+ 1.36+ 1.45) & 7 & 6  \\\hline
4 & (0, 0, 1.12, 1.13, 1.14+ 1.23, 1.15+ 1.24, 1.16+ 1.34, 1.17+ 1.26+ 1.35, 1.18+ 1.36+ 1.45) & 7 & 6  \\\hline
5 & (0, 0, 1.12, 1.13, 1.14+ 1.23, 1.15+ 1.24, 1.16+ 1.34, 1.17+ 1.26, 1.18+ 1.27+ 1.36+ 1.45) & 7 & 6  \\\hline
6 & (0, 0, 1.12, 1.13, 1.14+ 1.23, 1.15+ 1.24, 1.16+ 1.34, 1.17+ 1.26+ 1.35, 1.18+ 1.27+ 1.45) & 7 & 6  \\\hline
7 & (0, 0, 1.12, 1.13, 1.14+ 1.23, 1.15+ 1.24, 1.16+ 1.34, 1.17+ 1.26+ 1.35, 1.18+ 1.27+ 1.36) & 7 & 6  \\\hline
8 & (0, 0, 1.12, 1.13, 1.14+ 1.23, 1.15+ 1.24, 1.16+ 1.25+ 1.34, 1.17+ 1.35, 1.18+ 1.36+ 1.45) & 7 & 6  \\\hline
9 & (0, 0, 1.12, 1.13, 1.14+ 1.23, 1.15+ 1.24, 1.16+ 1.25, 1.17+ 1.35, 1.18+ 1.27+ 1.36+ 1.45) & 7 & 6  \\\hline
10 & (0, 0, 1.12, 1.13, 1.14+ 1.23, 1.15+ 1.24, 1.16+ 1.25+ 1.34, 1.17, 1.18+ 1.27+ 1.36+ 1.45) & 7 & 6  \\\hline
11 & (0, 0, 1.12, 1.13, 1.14+ 1.23, 1.15+ 1.24, 1.16+ 1.25+ 1.34, 1.17+ 1.35, 1.18+ 1.27+ 1.45) & 7 & 6  \\\hline
12 & (0, 0, 1.12, 1.13, 1.14+ 1.23, 1.15+ 1.24, 1.16+ 1.25+ 1.34, 1.17+ 1.35, 1.18+ 1.27+ 1.36) & 7 & 6  \\\hline
13 & (0, 0, 1.12, 1.13, 1.14+ 1.23, 1.15+ 1.24, 1.16+ 1.25, 1.17+ 1.26+ 1.35, 1.18+ 1.36+ 1.45) & 7 & 6  \\\hline
14 & (0, 0, 1.12, 1.13, 1.14+ 1.23, 1.15+ 1.24, 1.16+ 1.25+ 1.34, 1.17+ 1.26, 1.18+ 1.36+ 1.45) & 7 & 6  \\\hline
15 & (0, 0, 1.12, 1.13, 1.14+ 1.23, 1.15+ 1.24, 1.16+ 1.25+ 1.34, 1.17+ 1.26+ 1.35, 1.18+ 1.45) & 7 & 6  \\\hline
16 & (0, 0, 1.12, 1.13, 1.14+ 1.23, 1.15+ 1.24, 1.16+ 1.25+ 1.34, 1.17+ 1.26+ 1.35, 1.18+ 1.36) & 7 & 6  \\\hline
17 & (0, 0, 1.12, 1.13, 1.14+ 1.23, 1.15+ 1.24, 1.16+ 1.25, 1.17+ 1.26, 1.18+ 1.27+ 1.36+ 1.45) & 7 & 6  \\\hline
18 & (0, 0, 1.12, 1.13, 1.14+ 1.23, 1.15+ 1.24, 1.16+ 1.25, 1.17+ 1.26+ 1.35, 1.18+ 1.27+ 1.45) & 7 & 6  \\\hline
19 & (0, 0, 1.12, 1.13, 1.14+ 1.23, 1.15+ 1.24, 1.16+ 1.25, 1.17+ 1.26+ 1.35, 1.18+ 1.27+ 1.36) & 7 & 6  \\\hline
20 & (0, 0, 1.12, 1.13, 1.14+ 1.23, 1.15+ 1.24, 1.16+ 1.25+ 1.34, 1.17+ 1.26, 1.18+ 1.27+ 1.45) & 7 & 6  \\\hline
21 & (0, 0, 1.12, 1.13, 1.14+ 1.23, 1.15+ 1.24, 1.16+ 1.25+ 1.34, 1.17+ 1.26, 1.18+ 1.27+ 1.36) & 7 & 6  \\\hline
22 & (0, 0, 1.12, 1.13, 1.14+ 1.23, 1.15+ 1.24, 1.16+ 1.25+ 1.34, 1.17+ 1.26+ 1.35, 1.18+ 1.27) & 7 & 6  \\\hline
23 & (0, 0, 1.12, 1.13, 1.14+ 1.23, 1.15+ 1.24, 1.16+ 1.25+ 1.34, 1.17+ 1.26+ 1.35, 1.18+ 1.27+ 1.36+ 1.45) & 7 & 8  \\\hline
\end{tabular}
\caption{\small 9-dimensional candidates of nullity 6 and 8}
\label{tab:9dimensionalcandidatesofnullity6}
\end{table}

\begin{table}
\tiny
	\centering
\begin{tabular}{|l|l|l|l|}
\hline
\multicolumn{1}{|c}{}&
\multicolumn{1}{|c}{Lie Bracket}&
\multicolumn{1}{|c}{index }&
\multicolumn{1}{|c|}{Nullity}\\\hline
1  & (0,0,0,0, 1.23, 1.24, 1.25 +1.34, 1.17+ 1.26+ 1.35, 1.18, 1.27 +1.36 +1.45) & 4 & 3  \\\hline
2  & (0,0,1.12, 0, 1.23, 1.24, 1.16 + 1.25+ 1.34, 1.17, 1.18+ 1.27+ 1.36+ 1.45) & 5 & 3  \\\hline
3  & (0,0,1.12, 0, 1.23, 1.24, 1.16 + 1.25+ 1.34, 1.17 +1.35, 1.18+ 1.27+ 1.45) & 5 & 3  \\\hline
4  & (0,0,1.12, 0, 1.23, 1.24, 1.16 + 1.25+ 1.34, 1.17 +1.35, 1.18+ 1.27+ 1.36) & 5 & 3  \\\hline
5  & (0,0,1.12, 0, 1.23, 1.24, 1.16 + 1.25+ 1.34, 1.17 +1.26, 1.18+ 1.36+ 1.45) & 5 & 3  \\\hline
6  & (0,0,1.12, 0, 1.23, 1.24, 1.16 + 1.25+ 1.34, 1.17 +1.26+ 1.35, 1.18+ 1.45) & 5 & 3  \\\hline
7  & (0,0,1.12, 0, 1.23, 1.24, 1.16 + 1.25+ 1.34, 1.17 +1.26+ 1.35, 1.18+ 1.36) & 5 & 3  \\\hline
8  & (0,0,1.12, 0, 1.23, 1.24, 1.16 + 1.25+ 1.34, 1.17 +1.26, 1.18+ 1.27+ 1.45) & 5 & 3  \\\hline
9  & (0,0,1.12, 0, 1.23, 1.24, 1.16 + 1.25+ 1.34, 1.17 +1.26, 1.18+ 1.27+ 1.36) & 5 & 3  \\\hline
10 & (0,0,1.12, 0, 1.23, 1.24, 1.16 + 1.25+ 1.34, 1.17 +1.26+ 1.35, 1.18+ 1.27) & 5 & 3  \\\hline
11 & (0,0,1.12, 0, 1.14+ 1.23, 0, 1.25+ 1.34, 1.17 +1.26, 1.18+ 1.27+ 1.36+ 1.45) & 5 & 3  \\\hline
12 & (0,0,1.12, 0, 1.14+ 1.23, 0, 1.25+ 1.34, 1.17 +1.26+ 1.35, 1.18+ 1.27+ 1.45) & 5 & 3  \\\hline
13 & (0,0,1.12, 0, 1.14+ 1.23, 0, 1.25+ 1.34, 1.17 +1.26+ 1.35, 1.18+ 1.27+ 1.36) & 5 & 3  \\\hline
14 & (0,0,1.12, 0, 1.14+ 1.23, 1.24, 1.25+ 1.34, 1.17, 1.18+ 1.27+ 1.36+ 1.45) & 5 & 3  \\\hline
15 & (0,0,1.12, 0, 1.14+ 1.23, 1.24, 1.25+ 1.34, 1.17+ 1.35, 1.18+ 1.27+ 1.36) & 5 & 3  \\\hline
16 & (0,0,1.12, 0, 1.14+ 1.23, 1.24, 1.25+ 1.34, 1.17+ 1.26, 1.18+ 1.36+ 1.45) & 5 & 3  \\\hline
17 & (0,0,1.12, 0, 1.14+ 1.23, 1.24, 1.25+ 1.34, 1.17 +1.26+ 1.35, 1.18+ 1.45) & 5 & 3  \\\hline
18 & (0,0,1.12, 0, 1.14+ 1.23, 1.24, 1.25+ 1.34, 1.17 +1.26+ 1.35, 1.18+ 1.45) & 5 & 3  \\\hline
19 & (0,0,1.12, 0, 1.14+ 1.23, 1.24, 1.25+ 1.34, 1.17 +1.26, 1.18+ 1.27+ 1.45) & 5 & 3  \\\hline
20 & (0,0,1.12, 0, 1.14+ 1.23, 1.24, 1.25+ 1.34, 1.17 +1.26, 1.18+ 1.27+ 1.36) & 5 & 3  \\\hline
21 & (0,0,1.12, 0, 1.14+ 1.23, 1.24, 1.25+ 1.34, 1.17 +1.26+ 1.35, 1.18+ 1.27) & 5 & 3  \\\hline
22 & (0,0,1.12, 0, 1.14+ 1.23, 0, 1.16+ 1.25+ 1.34, 1.17, 1.18+ 1.27+ 1.36+ 1.45) & 5 & 3  \\\hline
23 & (0,0,1.12, 0, 1.14+ 1.23, 0, 1.16+ 1.25+ 1.34, 1.17+ 1.35, 1.18+ 1.27+ 1.45) & 5 & 3  \\\hline
24 & (0,0,1.12, 0, 1.14+ 1.23, 0, 1.16+ 1.25+ 1.34, 1.17+ 1.35, 1.18+ 1.27+ 1.36) & 5 & 3  \\\hline
25 & (0,0,1.12, 0, 1.14+ 1.23, 0, 1.16+ 1.25+ 1.34, 1.17+ 1.26, 1.18+ 1.36+ 1.45) & 5 & 3  \\\hline
26 & (0,0,1.12, 0, 1.14+ 1.23, 0, 1.16+ 1.25+ 1.34, 1.17+ 1.26+ 1.35, 1.18+ 1.45) & 5 & 3  \\\hline
27 & (0,0,1.12, 0, 1.14+ 1.23, 0, 1.16+ 1.25+ 1.34, 1.17+ 1.26, 1.18+ 1.27+ 1.45) & 5 & 3  \\\hline
28 & (0,0,1.12, 0, 1.14+ 1.23, 0, 1.16+ 1.25+ 1.34, 1.17+ 1.26, 1.18+ 1.27+ 1.36) & 5 & 3  \\\hline
29 & (0,0,1.12, 0, 1.14+ 1.23, 0, 1.16+ 1.25+ 1.34, 1.17+ 1.26+ 1.35, 1.18+ 1.27) & 5 & 3  \\\hline
30 & (0,0,1.12, 0, 1.14+ 1.23, 1.24, 1.16+ 1.34, 1.17+ 1.35, 1.18+ 1.27+ 1.45) & 4 & 3  \\\hline
31 & (0,0,1.12, 0, 1.14+ 1.23, 1.24, 1.16+ 1.34, 1.17+ 1.35, 1.18+ 1.27+ 1.36) & 4 & 3  \\\hline
32 & (0,0,1.12, 0, 1.14+ 1.23, 1.24, 1.16+ 1.34, 1.17+ 1.26+ 1.35, 1.18+ 1.45) & 4 & 3  \\\hline
33 & (0,0,1.12, 0, 1.14+ 1.23, 1.24, 1.16+ 1.34, 1.17+ 1.26+ 1.35, 1.18+ 1.36) & 4 & 3  \\\hline
34 & (0,0,1.12, 0, 1.14+ 1.23, 1.24, 1.16+ 1.34, 1.17+ 1.26+ 1.35, 1.18+ 1.27) & 4 & 3  \\\hline
35 & (0,0,1.12, 1.13, 1.23, 0, 1.16+ 1.25, 1.17+ 1.26+ 1.35, 1.27+ 1.36+ 1.45) & 4 & 3  \\\hline
36 & (0,0,1.12, 1.13, 1.14+ 1.23, 0, 1.25+ 1.34, 1.26+ 1.35, 1.18+ 1.27+ 1.45) & 5 & 3  \\\hline
37 & (0,0,1.12, 1.13, 1.14+ 1.23, 0, 1.25+ 1.34, 1.26+ 1.35, 1.18+ 1.27+ 1.36) & 5 & 3  \\\hline
38 & (0,0,1.12, 1.13, 1.14+ 1.23, 0, 1.25+ 1.34, 1.17+ 1.26, 1.27+ 1.36+ 1.45) & 5 & 3  \\\hline
39 & (0,0,1.12, 1.13, 1.14+ 1.23, 0, 1.25+ 1.34, 1.17+ 1.26+ 1.35, 1.27+ 1.45) & 5 & 3  \\\hline
40 & (0,0,1.12, 1.13, 1.14+ 1.23, 0, 1.25+ 1.34, 1.17+ 1.26+ 1.35, 1.27+ 1.36) & 5 & 3  \\\hline
41 & (0,0,1.12, 1.13, 1.14, 0, 1.25+ 1.34, 1.17+ 1.26, 1.18+ 1.27+ 1.36+ 1.45) & 5 & 3  \\\hline
42 & (0,0,1.12, 1.13, 1.14, 0, 1.25+ 1.34, 1.17+ 1.26+ 1.35, 1.18+ 1.27+ 1.45) & 6 & 3  \\\hline
43 & (0,0,1.12, 1.13, 1.14, 0, 1.25+ 1.34, 1.17+ 1.26+ 1.35, 1.18+ 1.27+ 1.36) & 6 & 3  \\\hline
44 & (0,0,1.12, 1.13, 1.14+ 1.23, 0, 1.25+ 1.34, 1.17+ 1.26, 1.18+ 1.36+ 1.45) & 6 & 3  \\\hline
45 & (0,0,1.12, 1.13, 1.14+ 1.23, 0, 1.25+ 1.34, 1.17+ 1.26+ 1.35, 1.18+ 1.45) & 6 & 3  \\\hline
46 & (0,0,1.12, 1.13, 1.14+ 1.23, 0, 1.25+ 1.34, 1.17+ 1.26, 1.18+ 1.27+ 1.45) & 6 & 3  \\\hline
47 & (0,0,1.12, 1.13, 1.14+ 1.23, 0, 1.25+ 1.34, 1.17+ 1.26, 1.18+ 1.27+ 1.36) & 6 & 3  \\\hline
48 & (0,0,1.12, 1.13, 1.14+ 1.23, 0, 1.25+ 1.34, 1.17+ 1.26+ 1.35, 1.18+ 1.27) & 6 & 3  \\\hline
49 & (0,0,1.12, 1.13, 1.14+ 1.23, 0, 1.16+ 1.25+ 1.34, 1.26+ 1.35, 1.27+ 1.45) & 5 & 3  \\\hline
50 & (0,0,1.12, 1.13, 1.14+ 1.23, 0, 1.16+ 1.25+ 1.34, 1.26+ 1.35, 1.27+ 1.36) & 5 & 3  \\\hline
51 & (0,0,1.12, 1.13, 1.14+ 1.23, 0, 1.16+ 1.25+ 1.34, 1.35, 1.18+ 1.27+ 1.45) & 5 & 3  \\\hline
52 & (0,0,1.12, 1.13, 1.14+ 1.23, 0, 1.16+ 1.25+ 1.34, 1.35, 1.18+ 1.27+ 1.36) & 5 & 3  \\\hline
53 & (0,0,1.12, 1.13, 1.14+ 1.23, 0, 1.16+ 1.25+ 1.34, 1.26+ 1.35, 1.18+ 1.45) & 5 & 3  \\\hline
54 & (0,0,1.12, 1.13, 1.14+ 1.23, 0, 1.16+ 1.25+ 1.34, 1.26+ 1.35, 1.18+ 1.36) & 5 & 3  \\\hline
55 & (0,0,1.12, 1.13, 1.14+ 1.23, 0, 1.16+ 1.25+ 1.34, 1.26+ 1.35, 1.18+ 1.27) & 5 & 3  \\\hline
56 & (0,0,1.12, 1.13, 1.14, 0, 1.16+ 1.25+ 1.34, 1.17+ 1.26, 1.27+ 1.36+ 1.45) & 5 & 3  \\\hline
57 & (0,0,1.12, 1.13, 1.14, 0, 1.16+ 1.25+ 1.34, 1.17+ 1.26+ 1.35, 1.27+ 1.45) & 5 & 3  \\\hline
58 & (0,0,1.12, 1.13, 1.14+ 1.23, 0, 1.16+ 1.25+ 1.34, 1.17, 1.27+ 1.36+ 1.45) & 5 & 3  \\\hline
59 & (0,0,1.12, 1.13, 1.14+ 1.23, 0, 1.16+ 1.25+ 1.34, 1.17+ 1.35, 1.27+ 1.45) & 5 & 3  \\\hline
60 & (0,0,1.12, 1.13, 1.14, 0, 1.16+ 1.25+ 1.34, 1.17, 1.18+ 1.27+ 1.36+ 1.45) & 6 & 3  \\\hline
61 & (0,0,1.12, 1.13, 1.14, 0, 1.16+ 1.25+ 1.34, 1.17+ 1.35, 1.18+ 1.27+ 1.45) & 6 & 3  \\\hline
62 & (0,0,1.12, 1.13, 1.14, 0, 1.16+ 1.25+ 1.34, 1.17+ 1.35, 1.18+ 1.27+ 1.36) & 6 & 3  \\\hline
63 & (0,0,1.12, 1.13, 1.14, 0, 1.16+ 1.25+ 1.34, 1.17+ 1.26, 1.18+ 1.27+ 1.45) & 6 & 3  \\\hline
64 & (0,0,1.12, 1.13, 1.14, 0, 1.16+ 1.25+ 1.34, 1.17+ 1.26, 1.18+ 1.27+ 1.36) & 6 & 3  \\\hline
65 & (0,0,1.12, 1.13, 1.14, 0, 1.16+ 1.25+ 1.34, 1.17+ 1.26+ 1.35, 1.18+ 1.27) & 6 & 3  \\\hline
66 & (0,0,1.12, 1.13, 1.14+ 1.23, 0, 1.16+ 1.25+ 1.34, 1.17, 1.18+ 1.36+ 1.45) & 6 & 3  \\\hline
67 & (0,0,1.12, 1.13, 1.14+ 1.23, 0, 1.16+ 1.25+ 1.34, 1.17+ 1.35, 1.18+ 1.45) & 6 & 3  \\\hline
68 & (0,0,1.12, 1.13, 1.14+ 1.23, 0, 1.16+ 1.25+ 1.34, 1.17+ 1.35, 1.18+ 1.45) & 6 & 3  \\\hline
\end{tabular}
\caption{\small 9-dimensional nilsoliton metric Lie algebra candidates of nullity 3}
\label{tab:9DimensionalPossibleNilsolitonMetricLieAlgebras2}
\end{table}

\begin{table}
\tiny
	\centering
\begin{tabular}{|l|l|l|l|}
\hline
\multicolumn{1}{|c}{}&
\multicolumn{1}{|c}{Lie Bracket}&
\multicolumn{1}{|c}{index }&
\multicolumn{1}{|c|}{Nullity}\\\hline
69 & (0,0,1.12, 1.13, 1.14+ 1.23, 0, 1.16+ 1.25+ 1.34, 1.17, 1.18+ 1.27+ 1.45) & 6 & 3  \\\hline
70 & (0,0,1.12, 1.13, 1.14+ 1.23, 0, 1.16+ 1.25+ 1.34, 1.17, 1.18+ 1.27+ 1.36) & 6 & 3  \\\hline
71 & (0,0,1.12, 1.13, 1.14+ 1.23, 0, 1.16+ 1.25+ 1.34, 1.17+ 1.35, 1.18+ 1.27) & 6 & 3  \\\hline
72 & (0,0,1.12, 1.13, 1.14+ 1.23, 0, 1.16+ 1.25+ 1.34, 1.17+ 1.26, 1.18+ 1.45) & 6 & 3  \\\hline
73 & (0,0,1.12, 1.13, 1.14+ 1.23, 0, 1.16+ 1.25+ 1.34, 1.17+ 1.26+ 1.35, 1.18) & 6 & 3  \\\hline
74 & (0,0,1.12, 1.13, 1.14, 1.15, 1.25+ 1.34, 1.26, 1.18+ 1.27+ 1.36+ 1.45) & 6 & 3  \\\hline
75 & (0,0,1.12, 1.13, 1.14, 1.15, 1.25+ 1.34, 1.26+ 1.35, 1.18+ 1.27+ 1.45) & 6 & 3  \\\hline
76 & (0,0,1.12, 1.13, 1.14, 1.15, 1.25+ 1.34, 1.26+ 1.35, 1.18+ 1.27+ 1.36) & 6 & 3  \\\hline
77 & (0,0,1.12, 1.13, 1.14, 1.15, 1.25+ 1.34, 1.17, 1.18+ 1.27+ 1.36+ 1.45) & 6 & 3  \\\hline
78 & (0,0,1.12, 1.13, 1.14, 1.15, 1.25+ 1.34, 1.17+ 1.35, 1.18+ 1.27+ 1.36) & 6 & 3  \\\hline
79 & (0,0,1.12, 1.13, 1.14, 1.15, 1.16+ 1.25+ 1.34, 1.26, 1.27+ 1.36+ 1.45) & 7 & 3  \\\hline
80 & (0,0,1.12, 1.13, 1.14, 1.15, 1.16+ 1.25+ 1.34, 1.26+ 1.35, 1.27+ 1.45) & 7 & 3  \\\hline
81 & (0,0,1.12, 1.13, 1.14, 1.15, 1.16+ 1.25+ 1.34, 1.26+ 1.35, 1.27+ 1.36) & 7 & 3  \\\hline
82 & (0,0,1.12, 1.13, 1.14, 1.15, 1.16+ 1.25+ 1.34, 1.35, 1.18+ 1.27+ 1.45) & 7 & 3  \\\hline
83 & (0,0,1.12, 1.13, 1.14, 1.15, 1.16+ 1.25+ 1.34, 1.35, 1.18+ 1.27+ 1.36) & 7 & 3  \\\hline
84 & (0,0,1.12, 1.13, 1.14, 1.15, 1.16+ 1.25+ 1.34, 1.26, 1.18+ 1.27+ 1.45) & 7 & 3  \\\hline
85 & (0,0,1.12, 1.13, 1.14, 1.15, 1.16+ 1.25+ 1.34, 1.26, 1.18+ 1.27+ 1.36) & 7 & 3  \\\hline
86 & (0,0,1.12, 1.13, 1.14, 1.15, 1.16+ 1.25+ 1.34, 1.26+ 1.35, 1.18+ 1.27) & 7 & 3  \\\hline
87 & (0,0,1.12, 1.13, 1.14, 1.15, 1.16+ 1.25+ 1.34, 1.17, 1.27+ 1.36+ 1.45) & 6 & 3  \\\hline
88 & (0,0,1.12, 1.13, 1.14, 1.15, 1.16+ 1.25+ 1.34, 1.17+ 1.35, 1.27+ 1.45) & 6 & 3  \\\hline
89 & (0,0,1.12, 1.13, 1.14, 1.15, 1.16+ 1.25+ 1.34, 1.17+ 1.35, 1.27+ 1.36) & 6 & 3  \\\hline
90 & (0,0,1.12, 1.13, 1.14, 1.15, 1.16+ 1.25+ 1.34, 1.17, 1.18+ 1.27+ 1.45) & 7 & 3  \\\hline
91 & (0,0,1.12, 1.13, 1.14, 1.15, 1.16+ 1.25+ 1.34, 1.17, 1.18+ 1.27+ 1.36) & 7 & 3  \\\hline
92 & (0,0,1.12, 1.13, 1.14, 1.15, 1.16+ 1.25+ 1.34, 1.17+ 1.35, 1.18+ 1.27) & 7 & 3  \\\hline
93 & (0,0,1.12, 1.13, 1.14+ 1.23, 1.15+ 1.24, 1.16+ 1.34, 1.17+ 1.35, 1.18) & 7 & 3  \\\hline
94 & (0,0,1.12, 1.13, 1.14+ 1.23, 1.15+ 1.24, 1.16+ 1.34, 1.17+ 1.26, 1.18) & 7 & 3  \\\hline
95 & (0,0,1.12, 1.13, 1.14+ 1.23, 1.15+ 1.24, 1.16+ 1.25, 1.17, 1.18+ 1.45) & 7 & 3  \\\hline
96 & (0,0,1.12, 1.13, 1.14+ 1.23, 1.15+ 1.24, 1.16+ 1.25, 1.17, 1.18+ 1.36) & 7 & 3  \\\hline
97 & (0,0,1.12, 1.13, 1.14+ 1.23, 1.15+ 1.24, 1.16+ 1.25, 1.17+ 1.35, 1.18) & 7 & 3  \\\hline
98 & (0,0,1.12, 1.13, 1.14+ 1.23, 1.15+ 1.24, 1.16+ 1.25+ 1.34, 1.17, 1.18) & 7 & 3  \\\hline
\end{tabular}
\caption{\small 9-dimensional nilsoliton metric Lie algebra candidates of nullity 3}
\label{tab:9DimensionalPossibleNilsolitonMetricLieAlgebras3}
\end{table}

\vspace{3cm} 
\section{Acknowledgement} Author declares that she has no competing interest.


\begin{thebibliography}{99}
\bibitem{Besse}\textsc{Arthur L. Besse}, Einstein manifolds, volume 10 of Ergebnisse der Mathematik und ihrer Grenzgebiete (3) [Results in Mathematics and Related
Areas (3)]. Springer-Verlag, Berlin, 1987.

\bibitem{Will}\textsc{Cynthia Will}, Rank-one Einstein solvmanifolds of dimension 7. Differential Geom. Appl., 19(3):307-318, 2003.
\bibitem{Culmaa}\textsc{Edison Alberto Fernandez Culma}, Classification of 7-dimensional Einstein
nilradicals. arXiv:1105.4489, 2011.
\bibitem{Culmab}\textsc{Edison Alberto Fernandez Culma}, Classification of 7-dimensional Einstein
nilradicals II. arXiv:1105.4493, 2011.
\bibitem{Kad}\textsc{Hulya Kadioglu and Tracy Payne}, Computational methods for nilsoliton metric Lie algebras I,50 (2013), 350 - 373, Journal of Symbolic Computation.
\bibitem{Lauretb}\textsc{Jorge Lauret}, Finding Einstein solvmanifolds by a variational method.
Math. Z., 241(1):83-99, 2002.
\bibitem{Lauretc}\textsc{Jorge Lauret}, Einstein solvmanifolds and nilsolitons. In New developments
in Lie theory and geometry, volume 491 of Contemp. Math., pages 1-35, Amer. Math. Soc., Providence, RI, 2009.
\bibitem{Lauretd}\textsc{Jorge Lauret and Cynthia Will}, Einstein solvmanifolds: existence and
nonexistence questions, Math. Ann. 350 (2011), no. 1, 199- 225.
\bibitem{Hamilton}\textsc{R. Hamilton}, Three- manifolds with positive Ricci curvature, J. Diff. Geom. 17 no: 2, 255- 306 (1982).
\bibitem{Arroyo}\textsc{Romina M. Arroyo}, Filiform nilsolitons of dimension 8, Rocky Mountain J. Math. 41 (2011), no. 4, 1025-1043.
\bibitem{Paynea}\textsc{Tracy L. Payne}, The existence of soliton metrics for nilpotent Lie groups, Geom. Dedicata, 145: 71-88, 2010.
\bibitem{Graafb}\textsc{Willem A. de Graaf}, Classification of 6-dimensional nilpotent Lie algebras
over fields of characteristic no 2. J. Algebra, 309(2): 640-653, 2007.
\bibitem{Nikoa}\textsc{Yuri Nikolayevsky}, Einstein solvmanifolds with a simple Einstein derivation. Geom. Dedicata, 135: 87-102, 2008.
\end{thebibliography}
\end{document}